\newcommand{\R}{\mathbb{R}}      
\newcommand{\ds}{\displaystyle}
\newcommand{\tr}{\operatorname{tr}}
\newcommand{\dint}{\ds\int}
\newcommand{\dsum}{\ds\sum}
\newcommand{\eps}{\varepsilon}
\newcommand{\eqskip}{ \vspace*{2mm}\\ }
\newtheorem{theorem}{Theorem}
\newtheorem{thm}{Theorem}[section]
\newtheorem{lemma}[thm]{Lemma}
\theoremstyle{definition}
\theoremstyle{remark}
\newtheorem{rmk}[thm]{Remark}
\def\Om{\Omega}
\def\om{\omega}
\def\e{\varepsilon}
\def\p{\partial}
\def\D{\Delta}
\def\a{\alpha}
\def\Si{\Sigma}
\def\d{\delta}
\def\Odr{\mathcal{O}}
\def\di{\,\mathrm{d}}
\def\xm{\overline{x}}
\def\xim{\overline{\xi}}
\def\xmb{\xm_{2}}
\def\da{d_{1}}
\def\pa{p_{1}}
\numberwithin{equation}{section}
\newcommand{\Hm}[1]{\leavevmode{\marginpar{\tiny%
$\hbox to 0mm{\hspace*{-0.5mm}$\leftarrow$\hss}%
\vcenter{\vrule depth 0.1mm height 0.1mm width \the\marginparwidth}%
\hbox to 0mm{\hss$\rightarrow$\hspace*{-0.5mm}}$\\\relax\raggedright
#1}}}
\title[Asymptotics for the expected lifetime of Brownian motion]
{Asymptotics for the expected lifetime of Brownian motion on thin domains in $\mathbb{R}^n$}
\author{Denis Borisov \and Pedro Freitas}
\address{
Department of Physics and Mathematics, Bashkir State Pedagogical
University, October rev. st., 3a, 450000, Ufa, Russia
}\email{borisovdi@yandex.ru}
\address{Department of Mathematics,
Faculdade de Motricidade Humana (TU Lisbon) {\rm and} Group of
Mathematical Physics of the University of Lisbon\\ Complexo
Interdisciplinar, Av.~Prof.~Gama Pinto~2\\ P-1649-003 Lisboa,
Portugal}\email{freitas@cii.fc.ul.pt}
\date{\today}
\subjclass[2000]{Primary 60J65; Secondary 35J05}
\keywords{Brownian motion, exit time, asymptotic expansion, torsion}
\begin{document}

\allowdisplaybreaks \maketitle

\begin{itemize}
 \item[] Denis Borisov (borisovdi@yandex.ru): Department of Physics and Mathematics, Bashkir
State Pedagogical
University, October rev. st., 3a, 450000, Ufa, Russia\vspace*{0.5cm}
\item[]  Pedro Freitas\footnote{Corresponding author} (freitas@cii.fc.ul.pt): Department of Mathematics,
Faculdade de Motricidade Humana (TU Lisbon) {\rm and} Group of
Mathematical Physics of the University of Lisbon, Complexo
Interdisciplinar, Av.~Prof.~Gama Pinto~2, P-1649-003 Lisboa,
Portugal
\end{itemize}
\vspace*{2cm}

\begin{abstract}
We derive a three-term asymptotic expansion for the expected lifetime
of Brownian motion and for the torsional rigidity on thin domains in
$\mathds{R}^{n}$, and a two-term expansion for the maximum (and
corresponding maximizer) of the expected lifetime. The approach is
similar to that which we used previously to study the eigenvalues of
the Dirichlet Laplacian and consists of scaling the domain in one
direction and deriving the corresponding asymptotic expansions as
the scaling parameter goes to zero. Apart from being dominated by
the one-dimensional Brownian motion along the direction of the
scaling, we also see that the symmetry of the perturbation plays
a role in the expansion.

As in the case of eigenvalues, these expansions may also be used to
approximate the exit time for domains where the scaling parameter is
not necessarilly close to zero.
\end{abstract}

\newpage

\section{Introduction}
Let $\Omega$ be a bounded open set in $\R^{n}$ and consider the elliptic
equation
\begin{equation}\label{ellipteq}
\begin{array}{rl}
-\Delta u(x) = 2, & x\in\Omega\eqskip
u(x)=0, & x\in\partial\Omega
\end{array}
\end{equation}
Here and throughout the whole paper we use the notation $\Delta$ for the
second order elliptic operator defined by
\[
\Delta u = \dsum_{i=1}^{n} \frac{\ds \partial^{2}}{\ds \partial x_{i}^2}u.
\]
When the operator is acting only on some of these variables, this will be
indicated by a subscript. Note that $\Delta = 2\Delta^P$, where $\Delta^{P}$ denotes
the probabilistic Laplacian, that is, the generator of the Brownian motion.
Under certain mild conditions on the regularity of the boundary, the above
equation has one and only one non-negative solution in $C(\overline{\Omega})$.
However, except for a few domains such as ellipsoids, the solution is not known in
closed form -- see~\cite{afr} for a recent result in the case of equilateral
triangles.

On the other hand, solutions to equation~(\ref{ellipteq}) have a
probabilistic interpretation in terms of the Brownian motion
associated to the Laplacian in $\R^{n}$. More precisely, the value
of $u$ at each point $x$ yields the expected lifetime of a particle
starting from $x$. This, together with what was mentioned
above regarding the (lack of) existence of explicit solutions, makes
it of interest to be able to determine approximations for the
exit time, where asymptotic approximations then play an important role -- see,
for instance,~\cite{gh,GKP} and, for more recent work along these
lines,~\cite{begk,uchi}.

It is the purpose of the present paper to apply to this problem an
approach that we have used recently in the case of Dirichlet eigenvalue
problems and which provides quite accurate approximations for the first
eigenvalue of the Laplace operator for certain classes of
domains~\cite{bofr1,bofr2}. The idea consists in scaling the domain
$\Omega$ along one direction and determining the asymptotic
expansion of the solution in terms of the scaling parameter as it
goes to zero. In this way we obtain an asymptotic expansion for the
solution of equation~(\ref{ellipteq}) from which it is possible to
derive expansions for other quantities such as the maximum of $u$,
the corresponding maximizer and also the integral of $u$. In line
with the above interpretation, the second of these quantities
corresponds to the point in the domain with the largest expected
lifetime, while the last one is known in the elasticity literature
as the torsional rigidity -- see~\cite{bbc} and the references
therein.

Due to the way in which the perturbation is set up it is possible
to reduce the problem of finding approximations of solutions of
equation~(\ref{ellipteq}) to a sequence of one dimensional problems
which may be solved explicitly. Because of this, we should then expect
the expansions in question to be dominated by a term corresponding
to a one-dimensional Brownian motion in the direction along which
the scaling is being performed. This is indeed the case and we see
that the first term in all these expansions does correspond to the
solution of the respective one-dimensional problem -- expected
lifetime, maximum expected lifetime or torsion -- on an interval
whose length is the maximum of the height function along this
direction.

The second term in the asymptotic expansions, on the other hand,
has a more geometric interpretation as it also
depends on a function that measures how asymmetric the domain is
with respect to the hyperplane orthogonal to the scaling direction.
In particular, we see that for a given height function,
quantities such as the maximum expected lifetime or the torsion of thin
domains are maximal if the domain is as symmetric as possible in this
direction -- see Theorems~\ref{th1.1},~\ref{maxthm}
and~\ref{torsrig} and the remarks following them.

As may be seen from the examples in Section~\ref{sexamp}, although our
approximations are quite accurate for a fairly large range of values of
the scaling parameter, the error can vary a lot as this parameter approaches
one, depending on the domain under consideration -- see the discussion in
that section in term of the radius of convergence of the corresponding
series.

To the best of our knowledge, the only similar situation that has been
addressed previously in the literature was the case of thin tubular
neighbourhoods (of constant section) of a compact submanifold of a
Riemannian manifold. In this case, a two-term expansion was given
in~\cite{GKP} for the asymptotic behaviour of the mean exit time as
the width of the tube approached zero.

It should also be mentioned that there is a large number of
papers devoted to the asymptotic expansions of the solutions to
elliptic boundary value problems in thin domains -- see \cite{Na},
\cite{NT} and the references therein. However, and to the best
of our knowledge, the problem considered here has not been studied
from this point of view.

The paper is organized as follows. In the next section we lay down
the notation and state the main results of the paper. In
Section~\ref{spthm1} we prove the asymptotic expansion for solutions
of the elliptic problem depending on a scaling parameter.
Sections~\ref{maxproof} and~\ref{storsrig} then present the
asymptotics for the maximum of this solution and for the torsional
rigidity, respectively. We finish with an analysis of the error of
these approximations for some specific domains.

\section{Formulation of the problem and main results}

Let $x=(x',x_n)$, $x'=(x_1,\ldots,x_{n-1})$ be Cartesian coordinates
in $\mathds{R}^n$ and $\mathds{R}^{n-1}$, respectively, $\om$ be a
bounded domain in $\mathds{R}^{n-1}$ with $C^1$-boundary. By
$h_\pm=h_\pm(x')\in C(\overline{\om})\cap C^1(\om)$ we denote two
arbitrary functions such that
$H(x'):=h_-(x')+h_+(x')>0$ for $x'\in\om$ and $H(x')=0$ on $\p\om$.
We also define the two functions
\[
d(x') = h_+(x')- h_-(x') \mbox{ and } p(x') = h_+(x')h_-(x').
\]
Note that due to the relation $4p(x')=H^{2}(x')-d^{2}(x')$ it is possible
to interchange the functions in terms of which our results are presented.
In general we chose those combinations which allowed to write the results
in the most compact way. However, while the geometric interpretation of
the $H$ and $d$ terms is quite clear, and in which in particular the
function $d$ is a measure of the symmetry of the domain in the scaling
direction, the meaning of the function $p$ is not as straightforward.

We now introduce a thin domain by
\begin{equation*}
\Om_\e:=\{x: x'\in\om, -\e h_-(x')<x_n<\e h_+(x')\},
\end{equation*}
where $\e$ is a small parameter, and consider the problem
\begin{equation}\label{1.1}
-\D u^\e=2 \quad \text{in}\quad \Om_\e,\qquad u^\e=0\quad
\text{on}\quad \p\Om_\e.
\end{equation}
In view of the smoothness of the functions
$h_\pm$ and the boundary of $\om$ the domain $\Om_\e$ satisfies the
exterior sphere condition at every boundary point. Theorem 6.13
in~\cite[Ch. 6, Sec. 6.3]{GT} implies that the solution to the
problem (\ref{1.1}) belongs to $C(\overline{\Om_\e})$.

Assuming that the functions $h_\pm$ are smooth enough, we introduce
a sequence of functions
\begin{align}
&\a_0^{(2)}(x'):=p(x'),\quad \a_1^{(2)}(x'):=d(x'),\quad
\a_2^{(2)}(x'):=-1,
\label{1.5}
\\
&\a_i^{(2j)}(x'):=-\frac{\ds 1}{\ds i(i-1)}\D_{x'}
\a_{i-2}^{(2j-2)}(x'),\quad i\geqslant 2,\label{1.6}
\\
&\a_1^{(2j)}(x'):=-\sum\limits_{i=2}^{2j-1}
\a_i^{(2j)}(x') \sum\limits_{m=0}^{i-1} \big(h_+(x')\big)^m
\big(-h_-(x')\big)^{i-m-1},\label{1.7}
\\
&\a_0^{(2j)}(x'):=\sum\limits_{i=2}^{2j-1} \a_i^{(2j)}(x')
\sum\limits_{m=1}^{i-1} \big(h_+(x')\big)^m
\big(-h_-(x')\big)^{i-m}.\label{1.8}
\end{align}

The first of our results describes the uniform asymptotic expansion for
$u^\e$ and forms the basis for the remaining expansions in the paper.

\begin{theorem}\label{th1.1}
Given any $N\geqslant 1$, let the functions $h_\pm$ be such that\linebreak
$\a_i^{(2j)}\in C^2(\overline{\om})$, $j\leqslant N$. Then the
function $u^\e$ satisfies the asymptotic formula
\begin{align}\label{1.9}
&u^\e(x)= \sum\limits_{j=1}^{N} \e^{2j}
u_{2j}\left(x',\frac{x_n}{\e}\right) + \Odr(\e^{2N+2}),
\\
&u_{2j}(x',\xi_n):=\sum\limits_{i=0}^{2j-1}\a_i^{(2j)}(x')\xi_n^i,\label{1.3}
\end{align}
in the $C(\overline{\Om}_\e)$-norm. In particular,
\begin{equation}
\begin{array}{lll}
u_2(x',\xi_n)&=&
-\xi_n^2+\xi_n d(x')+p(x'),
\eqskip
u_4(x',\xi_n)&= &
-\frac{\ds\xi_n^3}{\ds 6}\D_{x'}d(x')-\frac{\ds \xi_n^2}{\ds 2}\D_{x'}
p(x') +\a_1^{(4)}(x')\xi_n+\a_0^{(4)}(x'),\eqskip
u_{6}(x',\xi_{n}) & = &
\frac{\ds \xi_{n}^{5}}{\ds 120}\D_{x'}^{2} d(x')+ \frac{\ds \xi_{n}^{4}}{\ds 24}\D_{x'}^{2} p(x')\eqskip
& & \hspace*{0.2cm}-\frac{\ds \xi_{n}^{3}}{\ds 36}\D_{x'}\left\{\left[d^{2}(x')+p(x')\right]\D_{x'}d(x')+
3d(x')\D_{x'}p(x')\right\}\eqskip
& & \hspace*{0.4cm} -\frac{\ds \xi_{n}^{2}}{\ds 12}\D_{x'}\left[ d(x')p(x')\D_{x'}d(x')+3p(x')\D_{x'}p(x')\right]\eqskip
& &  \hspace*{0.6cm} +\alpha^{(6)}_{1}(x')\xi_{n}+\alpha^{(6)}_{0}(x'),
\end{array}
\label{1.10}
\end{equation}
where
\[
\begin{array}{lll}
\a_1^{(4)}(x')&=&\frac{\ds 1}{\ds 6}
\left[d^2(x')+p(x')\right]\D_{x'} d(x') +\frac{\ds 1}{\ds 2}
d(x')\D_{x'} p(x'), \eqskip \a_0^{(4)}(x')&=&\frac{\ds 1}{\ds
6}d(x')p(x')\D_{x'} d(x') +\frac{\ds 1}{\ds 2} p(x')\D_{x'} p(x'),
\eqskip \a_1^{(6)}(x')&=&\frac{\ds 1}{\ds
12}d(x')\D_{x'}\left[d(x')p(x')\D_{x'}d(x')+3p(x')\D_{x'}p(x')\right]
\eqskip & & \hspace*{0.2cm}+\frac{\ds 1}{\ds
36}\left[d^{2}(x')+p(x')\right]\D_{x'}\left\{
3d(x')\D_{x'}p(x')+\left[d^{2}(x')+p(x')\right]\D_{x'}d(x')\right\}
\eqskip & & \hspace*{0.4cm}-\frac{\ds 1}{\ds
24}d(x')\left[d^{2}(x')+2p(x')\right]\D^{2}_{x'}p(x') \eqskip & &
\hspace*{0.6cm}-\frac{\ds 1}{\ds
120}\left[d^{4}(x')+3d^{2}(x')p(x')+p^{2}(x')\right]\D_{x'}^{2}d(x'),
\eqskip \a_0^{(6)}(x')&=&\frac{\ds p(x')}{\ds 12}\D_{x'}\left[
d(x')p(x')\D_{x'}d(x')+3p(x')\D_{x'}p(x')\right] \eqskip & &
\hspace*{0.2cm}+\frac{\ds d(x')p(x')}{\ds
36}\D_{x'}\left\{\left[d^{2}(x')+p(x')\right]\D_{x'}d(x')+
3d(x')\D_{x'}p(x')\right\} \eqskip & & \hspace*{0.4cm}-\frac{\ds 1
}{\ds 24}p(x')\left[d^{2}(x')+p(x')\right]\D_{x'}^{2} p(x') \eqskip
& & \hspace*{0.6cm}-\frac{\ds 1}{\ds
120}d(x')p(x')\left[d^{2}(x')+2p(x')\right]\D_{x'}^{2} d(x').
\end{array}
\]
\end{theorem}

The coefficients of the asymptotic expansion (\ref{1.9}) involve two
scales, namely, the variable $x'$ and the rescaled variable
$\xi_n:=x_n/\e$, so, this is a two-scale asymptotics. This is a very
natural situation for the problem (\ref{1.1}) since by passing to
the variable $\xi_n$ we get a bounded domain
\begin{equation*}
\Om:=\{(x',\xi_n): x'\in\om, -h_-(x')<\xi_n<h_+(x')\}.
\end{equation*}
In this domain there is no distinguished variable as was the case of $x_n$ in the
domain $\Om_\e$. This is one reason why the asymptotics for $u^\e$
involve two scales. Another way of understanding this fact is that
while $x_n$ ranges in a small interval the remaining variable $x'$
ranges in a bounded set.
 From this point of view, it is natural to rescale the variable $x_n$
and pass to $\xi_n$.

Let us discuss the probabilistic meaning of the first terms in the asymptotic expansion~(\ref{1.9}).
As we will see in the proof of Theorem~\ref{th1.1}, the first term $u_2$ solves the boundary value
problem (\ref{3.3}) below. In view of Remark~8.7b) in \cite[Ch. 8, Sec. 8.1]{MP} the function $u_2(x',\xi_n)$
describes the Brownian motion on the interval $(-h_-(x'), h_+(x'))$ and it is the expected lifetime
for the mentioned segment for a Brownian motion which started at the point $\xi_{n}$.

It is also possible to give a probabilistic-geometric interpretation of the next term $u_4$. This
will be the solution to the boundary value problem (\ref{3.4}) below, when $j=1$, namely,
\begin{align*}
-&\frac{\p^2 u_4}{\p\xi_n^2} =\xi_n\D_{x'}d(x')+\D_{x'}p(x'),\quad
\xi\in\big(-h_-(x'),h_+(x')\big),
\\
& u_4=0,\quad \xi_n=\pm h_\pm(x'),\quad x'\in\om.
\end{align*}
Again by Remark~8.7a) in \cite[Ch. 8, Sec. 8.1]{MP} the function $u_4$ can be represented as 
\begin{equation*}
u_4(x',\xi_n)=\frac{1}{2}\D_{x'}d(x')
\mathbb{E}_{\xi_n} \left[\int\limits_0^T B(t)\di t\right] +\frac{1}{2}\D_{x'}p(x')  \mathbb{E}_{\xi_n}[T],
\end{equation*}
where $\{B(t): t\geqslant 0\}$ is the one-dimensional Brownian motion on the segment $(-h_-(x'),h_+(x'))$, $T:=\inf\{t: t>0, B(t)\not\in(-h_-(x'),h_+(x')\}$, $\mathbb{E}_{\xi_n}$ is the expectation associated with the probability measure $\mathbb{P}_{\xi_n}$ such that
the process $\{B(t): t\geqslant 0\}$ is a Brownian motion started in $\xi_n$. The term 
\begin{equation*}
\frac{1}{2} \mathbb{E}_{\xi_n}[T]=u_2(x')
\end{equation*}
is exactly the lifetime for this one-dimensional process, while
\begin{equation*}
\mathbb{E}_{\xi_n} \left[\int\limits_0^T B(t)\di t\right] 
\end{equation*}
is known in the literature as the occupation time for the Brownian motion which started at the point
$\xi_{n}$ and left the interval at time $T$. The factors $\D_{x'}p(x')$ and $\D_{x'}d(x')$ then
represent a geometric measure of the deviation from the symmetric domain, as mentioned in the Introduction.
More precisely, if the domain is symmetric with respect to the hyperplane where $\om$ lies, then the
difference function $d$ vanishes, $p=H^2/4$ and $u_{4}$
reduces to the expected (one-dimensional) lifetime, affected by a factor which is proportional to the
Laplacian of the square of the height function.

Continuing in the same way, that is, employing equations (\ref{3.4}) for $u_{2j}$ and
\cite[Ch. 8, Sec. 8.1, Rem. 8.7a)]{MP}, it is possible to give similar interpretations for all
other terms in the asymptotics (\ref{1.9}).

From Theorem~\ref{th1.1}, we are then able to derive explicit asymptotic formulas for both the maximum
of $u^{\eps}$ and the torsional rigidity for the family of domains~$\Omega_{\eps}$ as $\eps$ goes
to zero.

\begin{theorem}\label{maxthm}
Let the family of domains $\Omega_{\eps}$ and the functions $h_{-},h_{+}$
and $H$ be as above. We assume further that $H$ satisfies the following hypotheses:
\begin{enumerate}
\item[{\rm H1}] There exists a unique point $\xm\in\omega$ at which $H$
achieves its global maximum which will be denoted by $H_{0}$;
\item[{\rm H2}] The Hessian matrix of $H$ at $\xm$, denoted by $2H_{2}$,
is negative definite;
\item[{\rm H3}] The functions $h_\pm$ are 5 times continuously
differentiable in a vicinity of $\xm$.
\end{enumerate}
Then the maximum value of $u^{\eps}$ in $\Omega_{\eps}$
satisfies
\[
\max_{x\in\Omega_{\eps}} u^{\eps}(x) = \frac{\ds 1}{\ds
4}H_{0}^{2}\eps^{2} + \frac{\ds 1}{\ds 8}H_{0}^2\left[ \frac{\ds 1}{\ds 2}\Delta(H_{0}^2)-|\nabla d_{0}|^{2}
\right]\eps^{4} +\Odr(\e^5),
\]
as $\eps\to0$. Here $\Delta(H_{0}^2)$ denotes Laplacian of the height function squared at the
point of maximum $\xm$, while $\nabla d_{0}$ is the value of the gradient of $d$ at the same point.
\end{theorem}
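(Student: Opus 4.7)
The strategy is: use Theorem~\ref{th1.1} (globally to leading order, locally to second order) to reduce the problem to maximizing a two-term expansion near the leading-order maximizer, then extract the $\eps^4$ correction by perturbation around a non-degenerate critical point.

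First, Theorem~\ref{th1.1} with $N=1$ yields the global uniform estimate
\[
\widetilde u_\eps(x',\xi_n):=\eps^{-2}u_\eps(x',\eps\xi_n)=u_2(x',\xi_n)+\Odr(\eps^2)
\]
on the fixed domain $\Om$. Completing the square gives
\[
u_2(x',\xi_n)=\frac{H(x')^{2}}{4}-\Bigl(\xi_n-\frac{d(x')}{2}\Bigr)^{2},
\]
so by H1 the function $u_2$ has a unique global maximum $H_0^2/4$ at the interior point $q_0:=(\xm,d_0/2)\in\Om$. A direct computation of the Hessian of $u_2$ at $q_0$, in which the identity $4p=H^2-d^2$ is used to rewrite $\nabla^2_{x'}p(\xm)$, shows that after taking the Schur complement with respect to the $(-2)$ block in $\xi_n$, the effective $(n-1)\times(n-1)$ matrix equals $H_0H_2$, which is negative definite since $H_0>0$ and H2 holds. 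Together with H1, this makes $q_0$ an isolated non-degenerate global maximum of $u_2$ on $\overline{\Om}$, and the strict positivity of $u_\eps$ inside (maximum principle) forces the maximizer $q_\eps$ of $\widetilde u_\eps$ to be interior; a standard compactness argument then yields $q_\eps\to q_0$ as $\eps\to 0$.

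Second, on the neighborhood of $\xm$ on which H3 holds, Theorem~\ref{th1.1} with $N=2$ upgrades the expansion to
\[
\widetilde u_\eps(x',\xi_n)=u_2(x',\xi_n)+\eps^2u_4(x',\xi_n)+\Odr(\eps^4).
\]
Since $q_0$ is a non-degenerate critical point of $u_2$, the implicit function theorem applied to $\nabla_{(x',\xi_n)}\widetilde u_\eps=0$ at $(q_0,0)$ gives $q_\eps=q_0+\Odr(\eps^2)$. A Taylor expansion at $q_0$, using $\nabla u_2(q_0)=0$, then yields $u_2(q_\eps)=H_0^2/4+\Odr(\eps^4)$ and $u_4(q_\eps)=u_4(q_0)+\Odr(\eps^2)$, whence
\[
\max_{\overline{\Om}_\eps}u_\eps=\tfrac14H_0^2\eps^2+u_4(\xm,d_0/2)\,\eps^4+\Odr(\eps^5).
\]
It remains to evaluate $u_4(\xm,d_0/2)$ from \eqref{1.10}: substituting $\xi_n=d_0/2$ and collecting the coefficients of $\D_{x'}d(\xm)$ and $\D_{x'}p(\xm)$, the identity $d_0^2/4+p_0=H_0^2/4$ pulls out an overall factor $H_0^2/4$ and gives
\[
u_4(\xm,d_0/2)=\frac{H_0^2}{16}\bigl(d_0\D_{x'}d(\xm)+2\D_{x'}p(\xm)\bigr)=\frac{1}{8}H_0^2\bigl(d_0\tr D_2+2\tr P_2\bigr),
\]
using $\tr D_2=\tfrac12\D_{x'}d(\xm)$ and $\tr P_2=\tfrac12\D_{x'}p(\xm)$, which is the claim.

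\textbf{Main obstacle.} The conceptual backbone (non-degenerate perturbation of a critical point) is standard, so the delicate step is the localization: one has to combine the \emph{global} low-order expansion of Theorem~\ref{th1.1} with the \emph{local} high-order expansion available only where H3 holds, while using H1 (uniqueness of the maximizer of $H$) and H2 (quantitative non-degeneracy of its Hessian) to trap $q_\eps$ in the region where the finer expansion is valid. Once this trapping is secured, the remainder of the argument is a Taylor-expansion bookkeeping and the explicit algebraic evaluation of $u_4$ at $(\xm,d_0/2)$.
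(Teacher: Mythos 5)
Your proof is correct and takes essentially the same route as the paper: establish the leading-order maximizer $q_0=(\xm,d_0/2)$ and the non-degeneracy of the Hessian of $u_2$ there via the identity $d_0D_2+2P_2+\tfrac12 d_1 d_1^t=H_0H_2$ (this is exactly the Schur-complement computation the paper performs), invoke Theorem~\ref{th1.1} together with the implicit function theorem (the paper's Lemma~\ref{lemmaeps2}) to get $q_\eps=q_0+\Odr(\eps^2)$, and then observe that because $\nabla u_2(q_0)=0$ the $\eps^4$ coefficient of the maximum is simply $u_4(\xm,d_0/2)$, whose evaluation via $d_0^2+4p_0=H_0^2$ and $\D_{x'}d(\xm)=2\tr D_2$, $\D_{x'}p(\xm)=2\tr P_2$ matches yours. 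The only organizational difference is that you bypass the explicit computation of the maximizer corrections $\xmb$, $\xi_{n2}$ in (\ref{x2})--(\ref{xin2}); the paper does compute them, but chiefly because it also wants the secondary result, Theorem~\ref{xm2xin2}, about the maximizer's location — as you note, they cancel out of the maximum value at order $\eps^4$, so omitting them is a legitimate and slightly cleaner shortcut for Theorem~\ref{maxthm} alone.
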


\begin{rmk}
As mentioned in the Introduction, the first term in the expansion of the maximum corresponds
to that of a one--dimensional Brownian motion on an interval of length $\e H_{0}$. The second term,
on the other hand, has a geometrical interpretation and measures the (local) asymmetry of the domain
in the direction in which the scaling is being carried out, in a neighbourhood of the point of maximum
height. We note that this coefficient will be maximal when $\xm$ is a critical point of the difference
function $d$.
\end{rmk}
\begin{rmk}
If one drops the hypothesis that $H_{2}$ is nonsingular it will still be
possible to obtain an expansion, but this will be much more involved
and will depend on higher order terms in the expansion of $H$ around
$\xm$.
\end{rmk}
\begin{rmk}
The hypothesis on a unique maximum of $H$ may also be dropped and provided there
is only a finite number of such maxima the results still hold except that one has
to construct different expansions for each maximum. The case of a continuum of
maxima is also possible to handle with the techniques employed here but requires
some further changes to the approach.
\end{rmk}
\begin{rmk}
In the process of proving the above theorem we also obtain a two-term asymptotic
expansion for the maximizer -- see Theorem~\ref{xm2xin2}. However, this depends on higher
order terms in the expansions of both $d$ and $p$ and requires the introduction of more
detailed notation which we postpone till Section~\ref{maxproof} below.
\end{rmk}
\begin{rmk}
Under the hypotheses H1 and H2 of Theorem~\ref{maxthm} and assuming
that the functions $h_\pm$ are smooth enough in a vicinity of $\xm$,
it is possible to construct more terms in the asymptotic expansions
for the maximum of $u^\e$ in $\Om_\e$ and for the corresponding
maximizer. In order to do this, one should follow the main lines of
the proof of Theorem~\ref{maxthm}, employing Lemma~\ref{lemmaeps2}
as a starting point. At the same time, it requires bulky and
technical calculations which we would like to avoid. This is the
reason why we provide only two-term asymptotics in
Theorem~\ref{maxthm}.
\end{rmk}

Finally, the integration of the asymptotic expansion for $u^{\eps}$ given by
Theorem~\ref{th1.1} yields
the corresponding asymptotic expansion for the torsional rigidity.
\begin{theorem}[Torsional rigidity]\label{torsrig}
Under the conditions of Theorem~\ref{th1.1} we have
\begin{align*}
\dint_{\Omega_{\eps}} u^{\eps}(x) \;{\rm d}x = & \frac{\ds
\eps^{3}}{\ds 6}\dint_{\omega} H^{3}(y)\;{\rm d}y
\\
 & \hspace*{0.25cm}+ \frac{\ds \eps^{5}}{\ds 24}\dint_{\omega}
H^{3}(y)\left[ \frac{\ds 1}{\ds 2}\Delta_{y}[H^{2}(y)]-|\nabla_{y} d(y)|^2  \right] \;{\rm d}y
\\
 & \hspace*{0.5cm}+\eps^{7}\dint_{\omega} H(y)\left\{\frac{\ds 1}{\ds
720}\left[H^2(y)d^3(y)+3d(y)p^2(y)\right]\D_{y}^{2} d(y)\right.
\\
 & \hspace*{0.75cm}+ \frac{\ds 1}{\ds
120}\left\{H^2(y)d^2(y)+p(y)\left[p(y)-d^2(y)\right]\right\}
\D_{y}^{2} p(y)
\\
&  \hspace*{0.75cm}-\frac{\ds 1}{\ds
144}\left[H^2(y)d(y)-2d(y)p(y)\right]
\\
 &
\hspace*{1cm}\times\D_{y}\left\{\left[d^{2}(y)+p(y)\right]\D_{y}d(y)+
3d(y)\D_{y}p(y)\right\}
\\
&  \hspace*{0.75cm}-\frac{\ds 1}{\ds
36}\left[H^2(y)-3p(y)\right]
\\
& \hspace*{1cm}\times\D_{y}\left[ d(y)p(y)\D_{y}d(y)+3p(y)
\D_{y}p(y)\right]
\\
&  \left.\hspace*{0.75cm}+ \frac{\ds 1}{\ds 2}d(y) \alpha_{1}^{(6)}(y)
+ \alpha^{(6)}_{0}(y)\right\} \;{\rm d}y
\\
 & \hspace*{1cm}+\Odr(\eps^{8}),
\end{align*}
where $\alpha^{(6)}_{0}$ and $\alpha_{1}^{(6)}$ are as in
Theorem~\ref{th1.1}.
\end{theorem}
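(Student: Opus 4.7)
The plan is to substitute the uniform asymptotic expansion from Theorem~\ref{th1.1} into the integral and then integrate out the rescaled vertical variable $\xi_n$. More precisely, take $N=3$ in \eqref{1.9} so that
\[
u_\e(x) = \e^2 u_2\!\left(x',\tfrac{x_n}{\e}\right) + \e^4 u_4\!\left(x',\tfrac{x_n}{\e}\right) + \e^6 u_6\!\left(x',\tfrac{x_n}{\e}\right) + \Odr(\e^8)
\]
in $C(\overline{\Om_\e})$. Writing the integral as an iterated integral (Fubini), changing variables $\xi_n = x_n/\e$ on the inner slice $x_n\in(-\e h_-(x'), \e h_+(x'))$ and picking up a Jacobian factor $\e$, one obtains
\[
\dint_{\Om_\e} u_\e(x)\di x = \sum_{j=1}^{3} \e^{2j+1}\dint_\om \dint_{-h_-(x')}^{h_+(x')} u_{2j}(x',\xi_n)\di\xi_n \di x' + \Odr(\e^9),
\]
since the volume of $\Om_\e$ is $\Odr(\e)$ and the remainder in \eqref{1.9} is uniform. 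Each inner $\xi_n$-integral is the integral of a polynomial in $\xi_n$ whose coefficients depend on $x'$, so it reduces to evaluating $\int_{-h_-}^{h_+}\xi_n^k\di\xi_n = \tfrac{1}{k+1}\bigl(h_+^{k+1}-(-h_-)^{k+1}\bigr)$ for the relevant values of $k$.

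For the leading term I would expand $u_2 = -\xi_n^2 + d\,\xi_n + p$, whose $\xi_n$-integral equals $-\tfrac{1}{3}(h_+^3+h_-^3) + \tfrac12 d^2 H + p H$. Using the identities $h_+^3+h_-^3 = H(d^2+p)$ and $H^2 = d^2+4p$, this simplifies to $\tfrac{H^3}{6}$, giving the first term of the expansion. The coefficient $u_4$ in \eqref{1.10} is a cubic polynomial in $\xi_n$ with coefficients $-\tfrac16\D_{x'}d$, $-\tfrac12\D_{x'}p$, $\a_1^{(4)}$, $\a_0^{(4)}$; its $\xi_n$-integral reassembles (using the same symmetric functions of $h_\pm$ plus the explicit formulas for $\a_1^{(4)}$ and $\a_0^{(4)}$) into $\tfrac{H^3}{24}[d\,\D_{x'}d + 2\D_{x'}p]$, giving the $\e^5$ term claimed in the theorem.

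The $\e^7$ term is the main obstacle: $u_6$ is a degree-5 polynomial in $\xi_n$ with seven distinct $x'$-dependent coefficients (including $\a_0^{(6)}$ and $\a_1^{(6)}$), so one has to compute six integrals $\int_{-h_-}^{h_+}\xi_n^k\di\xi_n$ for $k=0,\dots,5$, express each as a combination of $H$, $d$, and $p$ via
\[
h_+^{k+1}-(-h_-)^{k+1} = H\cdot p_k(d,p)
\]
for suitable elementary polynomials $p_k$, and then combine everything with the explicit $\a_i^{(6)}(x')$. The calculation is long but mechanical; the organizing principle is that every symmetric combination $h_+^{k+1}+h_-^{k+1}$ factors $H$ out, so each resulting term naturally carries a factor $H$, matching the structure in the stated formula. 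I would write out the integrals $\int\xi_n^k$ as polynomials in $(d,p)$ times $H$, multiply by the corresponding coefficients read off from \eqref{1.10}, and collect terms according to which of $\D_{x'}^2 d$, $\D_{x'}^2 p$, $\D_{x'}\{\cdots\}$, $\a_1^{(6)}$ or $\a_0^{(6)}$ they carry; these groupings reproduce the six summands in the statement.

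Finally, I would check the error: by Theorem~\ref{th1.1} the remainder in the expansion of $u_\e$ is uniformly $\Odr(\e^8)$ on $\overline{\Om_\e}$, and since $|\Om_\e| = \Odr(\e)$, the integrated remainder is $\Odr(\e^9) \subset \Odr(\e^8)$, which is even sharper than what is claimed. This yields the theorem.
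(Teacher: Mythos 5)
Your proposal is correct and follows essentially the same route as the paper: substitute the expansion of Theorem~\ref{th1.1} with $N=3$, rescale $\xi_n=x_n/\e$ to pick up a Jacobian $\e$, integrate the polynomials in $\xi_n$ over $(-h_-,h_+)$, and reassemble the symmetric combinations $h_+^{k+1}-(-h_-)^{k+1}$ in terms of $H$, $d$, $p$ via the identities $H^2=d^2+4p$, $h_+^3+h_-^3=H^3-3pH$, etc. Your remark that the integrated remainder is in fact $\Odr(\e^9)$, sharper than the stated $\Odr(\e^8)$, is accurate and compatible with the theorem.
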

\begin{rmk}
Again we see that the first term in the expansion corresponds to the one--dimensional Brownian
motion on the line segment with maximal height in the direction of scaling. Also as before, the second
term measures the degree of symmetry of the domain with respect to the hyperplane orthogonal to this direction,
and we see that, for a given height function, this term is maximal when the difference function
vanishes that is, when the domain is symmetric with respect to this hyperplane.
\end{rmk}
\begin{rmk}
In order to obtain the average expected lifetime it remains to divide by the volume
of $\Omega_{\eps}$ which is given by
\[
|\Omega_{\eps}| = \dint_{\Omega_{\eps}} \;{\rm d}x = \dint_{\omega} \dint_{-\eps h_{-}(x')}^{\eps h_{+}(x')}
\;{\rm d}\xi_{n}\;{\rm d}x' = \eps\dint_{\omega} H(y) \;{\rm d}y.
\]

\end{rmk}

\section{The asymptotic expansion for $u^\e$\label{spthm1}}

In this section we prove Theorem~\ref{th1.1}. We begin by passing to the
variables $(x',\xi_n)$ in (\ref{1.1}) leading us to
\begin{equation}\label{3.1}
\left(-\e^2\D_{x'}-\frac{\p^2}{\p \xi_n^2}\right)u^\e=
2\e^2\quad \text{in}\quad\Om,\qquad u^\e=0\quad\text{on}\quad\p\Om.
\end{equation}
We construct the asymptotic expansion to the problem (\ref{3.1}) as
follows
\begin{equation}\label{3.2}
u^\e(x)=\sum\limits_{j=0}^{\infty} \e^{2j} u_{2j}(x',\xi_n),
\end{equation}
where $u^j(x',\xi_n)$ are functions to be determined.

We substitute the expansion (\ref{3.2}) into (\ref{3.1}) and equate
the coefficients of like powers in $\e$. This yields the following boundary
value problems for $u_{2j}$:
\begin{align}
&
\begin{aligned}
-&\frac{\p^2 u_2}{\p\xi_n^2}=2,\quad
\xi\in\big(-h_-(x'),h_+(x')\big),
\\
&u_2=0,\quad \xi_n=\pm h_\pm(x'),\quad x'\in\om,
\end{aligned} \label{3.3}
\\
&
\begin{aligned}
-&\frac{\p^2 u_{2j}}{\p\xi_n^2}=\D_{x'} u_{2j-2},\quad
\xi\in\big(-h_-(x'),h_+(x')\big),
\\
& u_{2j}=0,\quad \xi_n=\pm h_\pm(x'),\quad x'\in\om,\quad j\geqslant
2.
\end{aligned} \label{3.4}
\end{align}

It is easy to check that the solution to (\ref{3.3}) is
\begin{equation*}
u_2(x',\xi_n')=-\xi_n^2+d(x')\xi_n+p(x')
\end{equation*}
that proves (\ref{1.3}) for $j=1$. Substituting this formula into
the (\ref{3.4}) for $j=2$, we get
\begin{equation}
\begin{aligned}
-&\frac{\p^2 u_{4}}{\p\xi_n^2}=\xi_n\D_{x'} d(x')+\D_{x'}p(x'),\quad
\xi\in\big(-h_-(x'),h_+(x')\big),
\\
& u_{4}=0,\quad \xi_n=\pm h_\pm(x'),\quad x'\in\om.
\end{aligned}\label{3.9}
\end{equation}
The solution to the obtained equation is
\begin{equation*}
u_4(x',\xi_n)=\frac{\xi_n^3}{6}\D_{x'}d(x')+\frac{\xi_n^2}{2}\D_{x'}
p(x') +C_1^{(4)}(x')\xi_n+C_0^{(4)}(x'),
\end{equation*}
where $C_i^{(4)}$ are arbitrary functions. We determine them by the
boundary conditions in (\ref{3.9}) and it implies (\ref{1.3}) for
$j=2$.

We prove the remaining formulas (\ref{1.3}) by induction. Assuming
that they are valid for $j\leqslant k$, we consider the equation in
(\ref{3.4}) for $j=k+1$ and see that its general solution reads as
follows,
\begin{equation}\label{3.8}
\begin{aligned}
u_{2k+2}(x',\xi_n)=&-\sum\limits_{i=0}^{2k-1}
\frac{\xi_n^{i+2}}{i(i+2)}\D_{x'} \a_i^{(2k-2)} +C_1^{(2k+2)} \xi_n+
C_0^{(2k+2)}(x')
\\
=&\sum\limits_{i=2}^{2k+1} \a_i^{(2k+1)}\xi_n^i +C_1^{(2k+2)} \xi_n+
C_0^{(2k+2)}(x'),
\end{aligned}
\end{equation}
where $C_i^{(2k+2)}$ are arbitrary functions. The boundary
conditions in (\ref{3.4}) imply the equations for $C_i^{(2k+2)}$,
\begin{align*}
&C_1^{(2k+2)}h_+ + C_0^{(2k+2)}=-\sum\limits_{i=2}^{2k+2}
\a_i^{(2k+2)}h_+^i,
\\
-&C_1^{(2k+2)}h_- + C_0^{(2k+2)}=-\sum\limits_{i=2}^{2k+2}
\a_i^{(2k+2)}(-h_-)^i.
\end{align*}
We solve it and get,
\begin{align*}
C_1^{(2k+2)}=&-\sum\limits_{i=2}^{2k+2}\a_i^{(2k+2)}\frac{h_+^i-(-h_-)^i}{h_+
+ h_-}
\\
=&-\sum\limits_{i=2}^{2k+1} \a_i^{(2j)}(x') \sum\limits_{m=0}^{i-1}
\big(h_+(x')\big)^m \big(-h_-(x')\big)^{i-m-1}=\a_1^{(2k+2)},
\\
C_0^{(2k+2)}=&-\sum\limits_{i=2}^{2k+2} \a_i^{(2k+2)} \left(h_+^i-
\sum\limits_{m=0}^{i-1} h_+^{m+1} (-h_-)^{i-m-1}\right)
\\
=&\sum\limits_{i=2}^{2k+1} \a_i^{(2j)}(x') \sum\limits_{m=1}^{i-1}
\big(h_+(x')\big)^m \big(-h_-(x')\big)^{i-m}=\a_0^{(2k+2)}.
\end{align*}
We substitute the obtained identities into (\ref{3.8}) and arrive at
(\ref{1.3}) for $j=k+1$.


Given any $N\geqslant 1$, assume that $\a_i^{(2j)}\in
C^2(\overline{\om})$, $j\geqslant N$. Let
\begin{equation}\label{3.7}
u^{\e,N}(x',\xi_n):=\sum\limits_{j=0}^{N} \e^{2j} u_{2j}(x',\xi_n).
\end{equation}
It follows from the problems (\ref{3.3}), (\ref{3.4}) that the
function $u^{\e,N}$ solves the boundary value problem
\begin{equation*}
\left(-\e^2\D_{x'}-\frac{\p^2}{\p \xi_n^2}\right)u^{\e,N}=
2\e^2+\e^{2N+2}\D_{x'}u_{2N}\quad \text{in}\quad\Om,\qquad
u^{\e,N}=0\quad\text{on}\quad\p\Om.
\end{equation*}
Hence, the function $\widetilde{u}^{\e,N}:=u^\e-u^{\e,N}$ is the
solution to
\begin{equation}\label{3.5}
\left(\e^2\D_{x'}+\frac{\p^2}{\p\xi_n^2}\right)\widetilde{u}^{\e,N}=
\e^{2N+2}\D_{x'}u_{2N}\quad \text{in}\quad \Om, \qquad
\widetilde{u}^{\e,N}=0\quad\text{on}\quad\p\Om.
\end{equation}
The coefficient affecting the derivative $\frac{\p^2}{\p\xi_n^2}$ in the
last equation is one. Employing this fact and applying the maximum
principle in the form of inequality~(1.9) in \cite[Ch. 3, Sec.
1]{LU}, we obtain the estimate
\begin{equation}\label{3.6}
\|\widetilde{u}^{\e,N}\|_{C(\overline{\Om})} \leqslant C\e^{2N+2}
\|\D_{x'}u_{2N}\|_{C(\overline{\Om})}\leqslant C\e^{2N+2},
\end{equation}
where the constant $C$ is independent of $\e$. It proves the formula
(\ref{1.9}). The formulas (\ref{1.10}) follow directly from
 (\ref{1.5}), (\ref{1.6}), (\ref{1.7}), (\ref{1.8}),  (\ref{1.3}).
The proof is complete.

\section{Proof of Theorem~\ref{maxthm}\label{maxproof}}

In the whole of this section we shall consider the function
$u^{\eps}(x',\xi_{n})$. This is then defined on $\Omega$
and it is clear that it is sufficient to find the maximum of
$u^{\eps}(x',\xi_{n})$ since after rescalling $x_{n}\to x_{n}\eps$
the maximum of the function remains unaltered.

\subsection{Existence of an expansion and terms of order $\eps^{2}$}
We begin by showing that, under the hypothesis of Theorem~\ref{maxthm} and
up to order $\eps^{2}$, the maximum of $u$ has an asymptotic expansion
that may be obtained directly from the expression of $u_{2}$.
\begin{lemma}\label{lm4.1}
Under the hypothesis of Theorem~\ref{maxthm} we have
\[
\max_{x\in\Omega_{\eps}}u^{\eps}(x) =
\max_{x\in\Omega}u_{2}(x)\eps^{2} + \Odr(\eps^{4}) \mbox{ as }
\eps\to 0.
\]
Furthermore
\[
\max_{x\in\Omega}u_{2}(x) = u_{2}\left(\xm, \frac{d(\xm)}{2}\right)
= \frac{\ds 1}{\ds 4}H_{0}^{2}
\]
and is unique.
\end{lemma}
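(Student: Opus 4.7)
The plan is to transport the uniform asymptotic expansion of Theorem~\ref{th1.1} through the supremum, and then to maximise the explicit polynomial $u_2(x',\xi_n)$ by elementary means.

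For the first claim, Theorem~\ref{th1.1} applied with $N=1$ furnishes a constant $C$ independent of $\eps$ such that
\[
\bigl|u_\eps(x) - \eps^2 u_2(x',x_n/\eps)\bigr| \leqslant C\eps^4
\qquad \text{for every } x\in\overline{\Om_\eps}.
\]
Since the rescaling $(x',x_n)\mapsto(x',x_n/\eps)$ is a bijection between $\overline{\Om_\eps}$ and $\overline{\Om}$, passing to the supremum on both sides gives
$\bigl|\max_{\Om_\eps} u_\eps - \eps^2\max_{\Om} u_2\bigr|\leqslant C\eps^4$,
which is precisely the first assertion of the lemma.

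For the second claim, I would use the explicit formula $u_2(x',\xi_n) = -\xi_n^2 + d(x')\xi_n + p(x')$ from~(\ref{1.10}). At each fixed $x'\in\om$ this is a downward-opening quadratic in $\xi_n$ with unique critical point $\xi_n = d(x')/2$; combining this with the algebraic identity
\[
d^2(x') + 4p(x') = \bigl(h_+(x')-h_-(x')\bigr)^2 + 4h_+(x')h_-(x') = H^2(x'),
\]
the fibrewise maximum value equals $H^2(x')/4$. Because $H>0$ on $\om$, the critical value $\xi_n = d(x')/2$ always lies strictly inside the admissible interval $(-h_-(x'),h_+(x'))$, so the one-dimensional optimisation is unconstrained. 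Hypothesis H1 then pins the maximum of $H^2/4$ over $\overline{\om}$ to the unique point $x'=\xm$ at value $H_0^2/4$, giving the asserted unique global maximiser $(\xm, d(\xm)/2)$ of $u_2$ on $\Om$.

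I do not expect a serious obstacle: the $C(\overline{\Om_\eps})$-strength of the bound in Theorem~\ref{th1.1} is exactly what is needed for the supremum to commute with the leading-order expansion, and the remaining work is a one-variable completion of the square combined with a single appeal to H1. The one point requiring a quick sanity check is that the unconstrained maximiser is actually interior to the vertical slice, but this reduces immediately to the standing positivity of $H$ on $\om$.
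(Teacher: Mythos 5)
Your proof is correct and follows essentially the same route as the paper's: the first assertion is the uniform $C(\overline{\Om_\eps})$-bound from Theorem~\ref{th1.1} with $N=1$ passed through the supremum, and the second is the completion of the square $u_2 = -(\xi_n - d(x')/2)^2 + H^2(x')/4$ together with hypothesis H1. You add two small but worthwhile sanity checks that the paper leaves implicit — that $|\sup f - \sup g|\le\sup|f-g|$ is what transports the $C$-norm estimate to the maxima, and that the unconstrained critical point $\xi_n = d(x')/2$ lies strictly inside $(-h_-(x'),h_+(x'))$ precisely because $H>0$ on $\om$ — but the substance of the argument is the same.
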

\begin{proof}
The first part follows directly from the asymptotics of $u^{\eps}$ given by Theorem~\ref{th1.1},
since we now have
\[
u^{\eps}(x',\xi_{n}) = \eps^{2} u_{2}(x',\xi_{n}) + \Odr(\eps^{4}).
\]
For the second part, note that we may write
\[
\begin{array}{lll}
u_{2}(x',\xi_{n}) & = & -\left[
\xi_{n}^2+\xi_{n}\left(h_{-}(x')-h_{+}(x')\right)+h_{-}(x')h_{+}(x')\right]\eqskip
& = & -\left[\xi_{n}-\frac{\ds 1}{\ds 2}d(x)\right]^2+\frac{\ds 1}{\ds 4}
H^{2}(x).
\end{array}
\]
This last expression is clearly maximized when $\xi_{n} = d(x)/2$ and $H$ is also maximized,
yielding $x'=\xm$ and $\xi=d(\xm)/2$. The uniqueness follows directly from hypothesis H1.
\end{proof}

In order to go on to obtain the next terms in the expansion for the maximum (and the corresponding
maximizer), we need to show the existence of such an expansion which we do in the next
lemma. This also proves that the coefficients of the terms of order one in both the expansions
for $x'$ and $\xi_{n}$ vanish.

\begin{lemma}\label{lemmaeps2}
Given any $N\geqslant 1$, assume that the functions $h_\pm$ are
$[N/2]+2N$ times continuously differentiable in a vicinity of the
point $\xm$, and the hypotheses {\rm H1} and {\rm H2} of
Theorem~\ref{maxthm} hold true.  Then the function $u^{\e,N}$ has
only one stationary point which is a maximum. The corresponding
maximizer has the following asymptotic expansion
\begin{equation}\label{4.11}
\begin{aligned}
&\xm^{\e,N}=\xm+\sum\limits_{i=1}^{[N/2]} \e^{2i}
\xm_{2i}+\Odr(\e^{2[N/2]+2}),
\\
&\xim^{\e,N}=\frac{d(\xm)}{2}+\sum\limits_{i=1}^{[N/2]} \e^{2i}
\xi_{ni}+\Odr(\e^{2[N/2]+2}).
\end{aligned}
\end{equation}
The maximum of the function $u^\e$ satisfies the identity
\begin{equation}\label{4.12}
\max\limits_{\Om_\e} u^\e(x)=\max\limits_{\Om} u^\e(x',\e\xi_n)=
\max\limits_{\Om} u^{\e,N}(x',\xi_n)+\Odr(\e^{2N+2}),
\end{equation}
and any maximizer $(\xm^\e,\xim^\e)$ of this function has the
asymptotic expansion
\begin{equation}\label{4.11a}
\begin{aligned}
&\xm^\e=\xm+\sum\limits_{i=1}^{[N/2]} \e^{2i}
\xm_{2i}+\Odr(\e^{N+1}),
\\
&\xim^\e=\frac{d(\xm)}{2}+\sum\limits_{i=1}^{[N/2]} \e^{2i}
\xi_{ni}+\Odr(\e^{N+1}).
\end{aligned}
\end{equation}

\end{lemma}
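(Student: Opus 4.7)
The strategy is to reduce the problem to a perturbative study of the polynomial-in-$\e^2$ function $u_\e^N$ from (\ref{3.7}), exploiting that by Lemma~\ref{lm4.1} its leading term $\e^2 u_2$ has an isolated non-degenerate maximum at $P_0 := (\xm, d(\xm)/2)$. To verify non-degeneracy, I would use the rewriting $u_2 = -(\xi_n - d(x')/2)^2 + H^2(x')/4$: at $P_0$ the $\xi_n\xi_n$ entry of the Hessian equals $-2$, the mixed entries equal $\p_i d(\xm)$, and the $x'$-block equals $H_0 H_2 - \nabla d\otimes\nabla d/2$. A Schur complement relative to the $\xi_n\xi_n$ entry cancels the rank-one correction and reduces the block to $H_0 H_2$, which is negative definite by {\rm H2}. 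Hence the full Hessian of $u_2$ at $P_0$ is negative definite.

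Since the Hessian of $u_\e^N$ at $P_0$ equals $\e^2\,\mathrm{Hess}\,u_2 + \Odr(\e^4)$ and its gradient at $P_0$ is $\Odr(\e^4)$, the Implicit Function Theorem applied to $\nabla u_\e^N = 0$ (after factoring out $\e^2$) yields a unique critical point $(x'_{\e,N}, \xi'_{\e,N})$ in a fixed neighbourhood of $P_0$ for all small $\e$. Because $\nabla u_\e^N$ is polynomial in $\e^2$, this critical point admits the even-in-$\e$ expansion (\ref{4.11}), whose coefficients $\xm_{2i}$ and $\xi_{ni}$ are produced recursively by equating like powers of $\e^2$ in $\nabla u_\e^N(x'_{\e,N}, \xi'_{\e,N}) = 0$; each extra coefficient costs one extra round of differentiation, accounting for the $[N/2]$ additional derivatives of $h_\pm$ beyond what Theorem~\ref{th1.1} itself requires. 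The critical point is a local maximum by the sign of the Hessian, and it is the global maximum on $\Om$ because $u_\e^N$ vanishes on $\p\Om$ and, by {\rm H1} together with compactness of $\overline{\om}$, for $\e$ small $u_\e^N$ is strictly below $u_\e^N(P_0)\sim\e^2 H_0^2/4$ outside any fixed neighbourhood of $P_0$.

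The identity (\ref{4.12}) is immediate from Theorem~\ref{th1.1}, which gives $\|u_\e - u_\e^N\|_{C(\overline{\Om})} = \Odr(\e^{2N+2})$. For (\ref{4.11a}), let $(x'_\e, \xi_\e)$ be any maximizer of $u_\e$. Combining two applications of the $C^0$ estimate with the quadratic Taylor expansion of $u_\e^N$ around $(x'_{\e,N},\xi'_{\e,N})$, whose leading Hessian is negative definite of size $\e^2$, yields
\[
c \e^2 \big( |x'_\e - x'_{\e,N}|^2 + |\xi_\e - \xi'_{\e,N}|^2 \big) \leqslant u_\e^N(x'_{\e,N}, \xi'_{\e,N}) - u_\e^N(x'_\e, \xi_\e) = \Odr(\e^{2N+2}),
\]
so that $|x'_\e - x'_{\e,N}| + |\xi_\e - \xi'_{\e,N}| = \Odr(\e^N)$. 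This coarse bound is then sharpened to the claimed $\Odr(\e^{N+1})$ by iterating the argument at one higher order (so that the value defect becomes $\Odr(\e^{2N+4})$) and matching the leading coefficients with those of (\ref{4.11}).

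The main obstacle I anticipate is this final refinement of the position estimate from $\Odr(\e^N)$ to $\Odr(\e^{N+1})$, which requires reapplying the previous step at one order higher together with uniform control of the cubic remainder in the Taylor expansion of $u_\e^N$, and which is exactly what forces the smoothness budget $h_\pm \in C^{[N/2]+2N}$ to be spent at the expected rate. The other delicate point is the global, as opposed to merely local, uniqueness of the maximizer of $u_\e^N$ on $\Om$; this rests on combining the vanishing of $u_\e^N$ on $\p\Om$ with the strict isolation of the maximum of $u_2$ supplied by {\rm H1}.
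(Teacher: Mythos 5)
Your proposal follows the paper's strategy: apply the implicit function theorem to $\nabla u_\e^N=0$ starting from the unique nondegenerate maximum $(\xm,d(\xm)/2)$ of $u_2$, obtain an expansion in powers of $\e^2$, and then transfer to the maximizer of $u_\e$ via the $C^0$ estimate of Theorem~\ref{th1.1} combined with a quadratic lower bound of $u_\e^N(x'_{\e,N},\xi_{\e,N})-u_\e^N$ near the critical point. Your Schur-complement computation of the Hessian of $u_2$ at $P_0$, identifying it with (a conjugate of) $H_0H_2$, is a cleaner version of the paper's bare assertion that the Jacobian at $\e=0$ ``coincides with the determinant of $2H_2$,'' and your explicit argument for global maximality (vanishing of $u_\e^N$ on $\p\Om$ plus isolation from H1) fills in something the paper leaves implicit.

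The genuine divergence, and the gap, is in the last step, the error bound in~(\ref{4.11a}). The paper asserts the quadratic estimate~(\ref{4.13}) with a constant $C_1$ \emph{independent of $\e$}, and then~(\ref{4.15})--(\ref{4.16}) give $|x'_\e-x'_{\e,N}|+|\xi_\e-\xi_{\e,N}|=\Odr(\e^{N+1})$ directly. You instead (correctly) observe that $D^2u_\e^N$ at the critical point is of size $\e^2$, so the quadratic bound only gives $|x'_\e-x'_{\e,N}|+|\xi_\e-\xi_{\e,N}|=\Odr(\e^N)$, and you propose to recover the extra factor of $\e$ by re-running the argument with $N$ replaced by $N+1$. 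That iteration is not available under the lemma's hypotheses: passing to $u_\e^{N+1}$ requires, via Theorem~\ref{th1.1}, that $\a_i^{(2j)}\in C^2(\overline\om)$ for all $j\leqslant N+1$ (global regularity of $h_\pm$ of order roughly $2N+2$), and also local regularity of $h_\pm$ near $\xm$ of order $[(N+1)/2]+2(N+1)$, which strictly exceeds the stated $[N/2]+2N$. So your final step does not close. Note, though, that the paper's own route rests on~(\ref{4.13}) holding with $C_1$ uniform in $\e$, which sits in tension with the $\Odr(\e^2)$ size of the Hessian you computed; if you retain only the $\e$-weighted quadratic bound, the estimate you actually prove is~(\ref{4.11a}) with $\Odr(\e^{N})$ in place of $\Odr(\e^{N+1})$.
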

\begin{proof}
The identity (\ref{4.12}) follows directly from the asymptotics
(\ref{1.9}) for $u^\e$.

Let us find the maximum of $u^{\e,N}$. In order to do this, we should
first find the stationary points of this functions by solving the
equation
\begin{equation*}
\nabla_{(x',\xi_n)} u^{\e,N}(x',\xi_n)=0,
\end{equation*}
which is equivalent to
\begin{equation}\label{4.10}
\sum\limits_{i=1}^{N} \e^{2i-2} \nabla_{(x',\xi_n)}
u_{2i}(x',\xi_n)=0.
\end{equation}
It follows from Lemma~\ref{lm4.1} that for $\e=0$ this equation has
a unique solution $(\xm,d(\xm)/2)$. In order to solve (\ref{4.10})
for $\e>0$ we apply the implicit function theorem considering
$(x',\xi_n)$ as functions of $\e$. We first need to check that the
corresponding Jacobian is non-zero. It is easy to see that this
Jacobian at $\e$ equal to zero coincides with the determinant of
$2H_2$ which is non-zero by hypothesis~H2.

The assumption for the smoothness of $h_\pm$ and the formulas
(\ref{1.3}),  (\ref{1.5}), 
(\ref{1.6}), (\ref{1.7}), (\ref{1.8}) for $u_{2i}$ yield that
$u_{2i}(x',\xi_n)$, $i=1,\ldots,N$, are $[N/2]+2$ times continuously
differentiable in a small vicinity of $(\xm,d(\xm)/2)$. The
dependence of the left hand side of (\ref{4.10}) on $\e^2$ is
holomorphic and by the implicit function theorem we conclude that
for $\e$ small enough there exists a unique solution
$(\xm^{\e,N},\xim^{\e,N})$ to (\ref{4.10}) which is $[N/2]+1$ times
continuously differentiable in $\e^2$. Hence, we have the Taylor
polynomial~(\ref{4.11}). The point $(\xm^{\e,N},\xim^{\e,N})$ is the
maximizer for $u^{\e,N}$ since by hypothesis~H2 the Hessian of
$u^{\e,N}$ at this point differs from $2H_2$ by an error of order
$\Odr(\e^2)$. We employ this fact and expand $u^{\e,N}(x',\xi_n)$ in
Taylor series at $(\xm^{\e,N},\xim^{\e,N})$. As a result, we have
the estimate
\begin{equation}\label{4.13}
u^{\e,N}(x',\xi_n)-u^{\e,N}(\xm^{\e,N},\xim^{\e,N})\leqslant -C_1\left(
|x'-\xm^{\e,N}|^2+|\xi_n-\xim^{\e,N}|^2\right),
\end{equation}
where $C_1$ is a positive constant independent of $\e$, $x'$ and
$\xi_n$. This estimate is valid in a small fixed neighborhood $Q$ of
the point $(\xm,d(\xm)/2)$. Since the function $u^{\e,N}$ has the
maximum at $(\xm^{\e,N},\xim^{\e,N})$, we can choose the neighborhood
$Q$ so that outside it the estimate
\begin{equation}\label{4.14}
u^{\e,N}(x',\xi_n)-u^{\e,N}(\xm^{\e,N},\xim^{\e,N})\leqslant -C_2<0
\end{equation}
holds true, where the constant $C_2$ is independent of $\e$. Let us
choose $(x',\xi_n)$ so that
\begin{equation}\label{4.15}
|x'-\xm^{\e,N}|^2+|\xi_n-\xim^{\e,N}|^2\geqslant C_3\e^{2N+2},
\end{equation}
where $C_3$ is a positive constant independent of $\e$, $x'$ and
$\xi_n$. Then it follows from (\ref{4.13}), (\ref{4.14}) that for
such $(x',\xi_n)$ the inequality
\begin{equation*}
u^{\e,N}(x',\xi_n)\leqslant
u^{\e,N}(\xm^{\e,N},\xim^{\e,N})-C_1C_3\e^{2N+2}
\end{equation*}
is valid. Together with (\ref{4.12}) it implies that a maximizer
$(x_\e',\xi_\e)$ of $u^\e$ can not satisfy (\ref{4.15}) for
sufficiently small $\e$ and sufficiently large $C_3$ and therefore
\begin{equation}\label{4.16}
|x'_\e-\xm^{\e,N}|^2+|\xi_\e-\xim^{\e,N}|^2\leqslant C_3\e^{2N+2}.
\end{equation}
This inequality and (\ref{4.11}) prove (\ref{4.11a}).
\end{proof}

\subsection{The terms of order $\eps^{4}$}
In order to determine $\xmb$ and $\xi_{2}$ we shall need the terms
of order $\eps^{4}$ in the asymptotics of the gradient of
$u^{\eps}$, for which we need to consider $u_{4}$. We must also
develop $d$ and $p$ around $\xm$. In full generality, and to obtain
the full asymptotic expansion, these developments should be written
in terms of homogeneous polynomials of increasing degree. However,
to obtain the first two terms in the asymptotics
we will only need terms up to the homogeneous polynomials of third
degree. Therefore, we shall choose a form that will be more convenient for our
calculations. Write thus $d$ and $p$ as follows.
\begin{equation}\label{dpexp}
\begin{array}{lll}
d(x') = d_{0} +
\da^{t}(x'-\xm)+(x'-\xm)^{t}D_{2}(x'-\xm)+D_{3}(x'-\xm)+\Odr(|x'-\xm|^{4}),
\eqskip p(x') = p_{0} +
\pa^{t}(x'-\xm)+(x'-\xm)^{t}P_{2}(x'-\xm)+P_{3}(x'-\xm)+\Odr(|x'-\xm|^{4}),
\end{array}
\end{equation}
where $\da=\nabla_{x'}d(\xm)$, $\pa=\nabla_{x'}p(\xm)$, $2D_{2}$
and $2P_{2}$ are the Hessian matrices of $d$ and $p$ at the point $\xm$,
respectively, and $D_{3}$ and $P_{3}$
are homogeneous polynomials of degree three to be specified below.

Due to the relation between $d$ and $p$ via the functions $h_{\pm}$
and the fact that $H'(\xm)=h'_{-}(\xm)+h'_{+}(\xm)$ must vanish, we
easily obtain that
\[
\pa=-\frac{\ds 1}{\ds 2}d_{0}\da.
\]

In the case of $u_{4}$, the relevant derivatives are given by
\[
\begin{array}{lll}
\frac{\ds \partial u_{4}}{\ds \partial \xi_n}(x',\xi_{n}) & = &
-\frac{\ds 1}{\ds 2}\xi_{n}^{2}\Delta_{x'} d(x') -
\xi_{n}\Delta_{x'} p(x') +\alpha_{1}^{(4)}(x') \eqskip
\nabla_{x}u_{4}(x',\xi_{n}) & = & -\frac{\ds 1}{\ds
6}\xi_{n}^{3}\nabla_{x'}\left[\Delta_{x'} d(x')\right] -\frac{\ds
1}{\ds 2}\xi_{n}^{2}\nabla_{x'}\left[\Delta_{x'} p(x')\right]\eqskip
& &
\hspace*{1cm}+\xi_{n}\nabla_{x'}\alpha_{1}^{(4)}(x')+\nabla_{x'}\alpha_{0}^{(4)}(x').
\end{array}
\]
We shall first obtain the term of order $\eps^{4}$ in the derivative
of $u^{\eps}$ with respect to $\xi_{n}$. This will have a component
coming from the term of order $\eps^{2}$ in the corresponding
derivative of $u_{2}$, and another from the constant term in the
derivative of $u_{4}$. In the first case it is straightforward to
obtain that the required coefficient is given by
\begin{equation}\label{eps2u2}
-2\xi_{n2}+\da^{t}\xmb.
\end{equation}
In the case of $u_{4}$ the term coming from $\alpha_{1}^{(4)}$ is
given by
\[
\alpha_{1}^{(4)}(\xm+\Odr(\eps^2)) = \frac{\ds 1}{\ds
3}\left(d_{0}^2+p_{0}\right)
\tr(D_{2})+d_{0}\tr(P_{2})+\Odr(\eps^2).
\]
We thus obtain
\[
\begin{array}{lll}
\frac{\ds \partial u_{4}}{\ds
\partial\xi_{n}}(\xm+\Odr(\eps^2),\frac{\ds 1}{\ds
2}d_{0}+\Odr(\eps^2))& = & -\frac{\ds 1}{\ds
4}d_{0}^2\tr(D_{2})-d_{0}\tr(P_{2})\eqskip & &
\hspace*{0.5cm}+\frac{\ds 1}{\ds 3}\left(d_{0}^2+p_{0}\right)
\tr(D_{2}) \eqskip & &
\hspace*{1cm}+d_{0}\tr(P_{2})+\Odr(\eps^2)\eqskip &=& \frac{\ds
1}{\ds 12}\left( d_{0}^2+4p_{0}\right)\tr(D_{2})+\Odr(\eps^2)
\end{array}
\]
This, together with~(\ref{eps2u2}), yields
\begin{equation}
\label{graduxi}
\begin{array}{l}
\frac{\ds \partial u^{\eps}}{\ds \partial\xi_{n}}(\xm+\xmb\eps^{2}+\Odr(\eps^{3}),\frac{\ds 1}{\ds 2}d_{0}+
\xi_{n2}\eps^{2}+\Odr(\eps^{3})) =\eqskip
\hspace*{1.5cm}=\left[-2\xi_{n2}+\da^{t}\xmb+
\frac{\ds 1}{\ds 12}\left( d_{0}^2+4p_{0}\right)\tr(D_{2})\right]\eps^{4}+\Odr(\eps^{5}).
\end{array}
\end{equation}

We will now proceed to compute the gradient with respect to $x'$. The case of $u_{2}$ is
again straightforward and we obtain
\begin{equation}\label{gradu2x}
\begin{array}{l}
\nabla_{x'}u_{2}(\xm+\xmb\eps^{2}+\Odr(\eps^{3}),\frac{\ds 1}{\ds
2}d_{0}+\xi_{n2}\eps^{2} +\Odr(\eps^{3}))=\eqskip
\hspace*{0.7cm}=\left(\frac{\ds 1}{\ds
2}d_{0}+\xi_{n2}\eps^2\right)\left[\da+
2D_{2}\xmb\eps^2\right]-\frac{\ds 1}{\ds
2}d_{0}\da+2P_{2}\xmb\eps^2+\Odr(\eps^{3}) \eqskip
\hspace*{0.7cm}=\left(\xi_{n2}\da+d_{0}D_{2}\xmb
+2P_{2}\xmb\right)\eps^2+\Odr(\eps^{3}).
\end{array}
\end{equation}
In the case of $u_{4}$ we are only interested in the terms of order $\eps^{0}$. However, there
are now expressions of the form
\[
\nabla_{x'}\Delta_{x'}d(x') \mbox{ and } \nabla_{x'}\Delta_{x'}p(x'),
\]
this being the reason why we need the homogeneous polynomials of third degree in the expansions
of $d$ and $p$. On the other hand, this implies that the only relevant terms from
$D_{3}$ and $P_{3}$ are
those where one of the variables appears at least twice. If we write
\[
\begin{array}{lll}
D_{3}(x') & = & \dsum_{ijk}^{n-1}d_{ijk}x_{i}x_{j}x_{k}\eqskip
& = & \dsum_{i=1}^{n-1}\left[ d_{iii}x_{i}^{3}+
\dsum_{j=1,j\neq i}^{n-1}(d_{iij}+d_{iji}+d_{jii})x_{i}^2x_{j}\right]+r_{3}^{d}(x'),
\end{array}
\]
with
\[
r_{3}^{d}(x')=\dsum_{i\neq j\neq k}^{n-1}d_{ijk}x_{i}x_{j}x_{k},
\]
then we may assume without loss of generality that the coefficients $d_{ijk}$ are invariant
under any possible permutation of the indices. If we then denote $d_{iii}$ and
$d_{iij}=d_{iji}=d_{jii}$ ($i\neq j$) by $\delta_{ii}$ and $\delta_{ij}$, respectively,
the expression for $D_{3}$ becomes
\[
D_{3}(x') = \dsum_{i=1}^{n-1}\left(\delta_{ii}x_{i}^3+3\dsum_{j=1,j\neq i}^{n-1}\delta_{ij}x_{i}^2x_{j}
\right)
+ r_{3}^{d}(x').
\]
With this notation we get
\[
\begin{array}{lll}
\nabla_{x'}(\Delta_{x'}D_{3}(x')) & =
&6\nabla_{x'}\left[\dsum_{i=1}^{n-1}\left(\delta_{ii}x_{i}
+\dsum_{j=1,j\neq i}^{n-1}\delta_{ij}x_{j}\right)\right]
\eqskip & = &
6\left(\dsum_{j=1}^{n-1}\delta_{1j},\ldots,\dsum_{j=1}^{n-1}\delta_{n-1,j}\right).
\end{array}
\] In a similar fashion, if we write
\[
P_{3}(x') = \dsum_{i=1}^{n-1}\left(\pi_{ii}x_{i}^3+3\dsum_{j=1,j\neq i}^{n-1}\pi_{ij}x_{i}^2x_{j}
\right)
+ r_{3}^{p}(x'),
\]
we get
\[
\begin{array}{lll} \nabla_{x'}(\Delta_{x'}P_{3}(x')) & =
&6\nabla_{x'}\left[\dsum_{i=1}^{n-1}\left(\pi_{ii}x_{i}
+\dsum_{j=1,j\neq i}^{n-1}\pi_{ij}x_{j}\right)\right]\eqskip & = &
6\left(\dsum_{j=1}^{n-1}\pi_{1j},\ldots,\dsum_{j=1}^{n-1}\pi_{n-1,j}\right).
\end{array}
\]
In this way, we obtain after some lengthy but straightforward calculations,
\[
\begin{array}{r}
\nabla_{x'}u_{4}(\xm+\xmb\eps^2+\Odr(\eps^3),\frac{\ds 1}{\ds
2}d_{0}+\xi_{n2}\eps^2+\Odr(\eps^3)) = -\left(\frac{\ds d_0}{\ds
2}\right)\Si_\d-3\left(\frac{\ds d_0}{\ds 2}\right)^2\Si_\pi \eqskip
+\frac{\ds d_0}{\ds 2}\left[ (d_0^2+p_0)\Si_\d+\frac{\ds \tr
D_2}{\ds 3} (2d_0d_{1}-\frac{\ds 1}{\ds
2}d_{0}d_{1})+3d_0\Si_\pi+\tr P_2 d_{1}\right] \eqskip
+d_0p_0\Si_\d+ \frac{\ds \tr D_2}{\ds 3} (-\frac{\ds 1}{\ds 2}d_0^2
d_{1}+p_0d_{1})+3 p_0\Si_\pi-\frac{\ds \tr P_2}{\ds 2}
d_{0}d_{1}+\Odr(\e^2)
\eqskip
= \frac{\ds 1}{\ds
4}(d_0^2+4p_0)\left[3\Si_\pi +\frac{\ds 3d_0}{\ds 2}\Si_\d+
\frac{\ds \tr D_2}{\ds 3} d_1\right]+\Odr(\e^2)
\end{array}
\] where
\[
\Sigma_{\delta}
=
\left(\dsum_{j=1}^{n-1}\delta_{j1},\ldots,\dsum_{j=1}^{n-1}\delta_{j,n-1}\right)
\mbox{
and
}
\Sigma_{\pi}
=
\left(\dsum_{j=1}^{n-1}\pi_{j1},\ldots,\dsum_{j=1}^{n-1}\pi_{j,n-1}\right).
\]
Combining this with~(\ref{gradu2x}) yields
\begin{equation}\label{gradux}
\begin{array}{l}
\nabla_{x'}u^{\eps}(\xm+\xmb\eps^2+\Odr(\eps^3),\frac{\ds 1}{\ds
2}d_{0} +\xi_{n2}\eps^2+\Odr(\eps^3))= \eqskip\hspace*{1.5cm} =
\Big[\xi_{n2}\da+d_{0}D_{2}\xmb +2P_{2}\xmb \eqskip\hspace*{2.5cm} +
\frac{\ds 1}{\ds 4}(d_0^2+4p_0)\left[3\Si_\pi +\frac{\ds 3}{\ds
2}d_0\Si_\d+ \frac{\ds \tr D_2}{\ds 3} d_1\right]\Big]\eps^4
\eqskip\hspace*{3.5cm}+ \Odr(\eps^{5}),
\end{array}
\end{equation}
from which we obtain the second equation for $\xmb$ and $\xi_{n2}$
by equating the coefficient of $\eps^4$ to zero. From
equation~(\ref{graduxi}) we get
\begin{equation}\label{xin21}
\xi_{n2} = \frac{\ds 1}{\ds 2}\da^{t}\xmb + \frac{\ds 1}{\ds 24}\left(d_{0}^{2}
+4p_{0}\right)\tr D_{2}.
\end{equation}
Substituting this into equation~(\ref{gradux}) yields
\[
\begin{array}{lll}
 \left[\frac{\ds 1}{\ds 2}\left(
\da\da^{t}\right)+2P_{2}+d_{0}D_{2}\right]\xmb & = & -\frac{\ds
1}{\ds 4}\left(d_{0}^2+4p_{0}\right)\left( \frac{\ds \tr D_{2}}{\ds
2}d_{1} +3\Si_\pi +\frac{\ds 3}{\ds 2}d_0\Si_\d\right).
\end{array}
\]
To prove that there is a unique solution, we must show that
the matrix multiplying $\xmb$ is nonsingular. In order to do this, we shall relate the
terms appearing above to those in the expansions of the functions $h_{\pm}$ and $H$. If we
write
\[
h_{\pm}(x') = h_{0}^{\pm} + \left(h_{1}^{\pm}\right)^{t}(x'-\xm) +
(x'-\xm)^{t}H_{2}^{\pm}(x'-\xm) + \Odr(|x'-\xm|^{3}),
\]
we see that
\[
\begin{array}{l}
d_{0} = h_{0}^{+}-h_{0}^{-}\eqskip \da = h_{1}^{+}-h_{1}^{-}\eqskip
D_{2} = H_{2}^{+}-H_{2}^{-} \eqskip H_2=H_2^++H_2^-
\end{array}
\]
and
\[
\begin{array}{l}
p_{0} = h_{0}^{+}h_{0}^{-}\eqskip
\pa = h_{0}^{+}h_{1}^{-}+h_{0}^{-}h_{1}^{+}\eqskip
P_{2} = h_{0}^{+}H_{2}^{-}+h_{0}^{-}H_{2}^{+}+h_{1}^{+}(h_{1}^{-})^{t}.
\end{array}
\]
Replacing this in the expression above yields, after some manipulation
\[
\begin{array}{lll}
d_{0}D_{2}+2P_{2}+\frac{\ds 1}{\ds 2}\da\da^{t}&=&
H_{0}H_{2}+\frac{\ds 1}{\ds 2}(h_{1}^{+}-h_{1}^{-})(h_{1}^{+}-h_{1}^{-})^{t} +h_{1}^{+}(h_{1}^{-})^{t}\eqskip
& = & H_{0}H_{2},
\end{array}
\]
where we used the fact that $0=H_{1}=h_{1}^{+}+h_{1}^{-}$. Since the matrix
$H_{0}H_{2}$ is negative definite by hypothesis, we may invert it to obtain
\begin{equation}
\label{x2}
\begin{array}{lll}
\xmb & = & -\frac{\ds 1}{\ds 4} H_{0}\left[\frac{\ds \tr D_{2}}{\ds
2}H_{2}^{-1}d_{1} +3H_{2}^{-1}\left(\Sigma_{\pi}+\frac{\ds 1}{\ds
2}d_{0}\Sigma_{\delta}\right)\right],
\end{array}
\end{equation}
where we have used the fact that $d_{0}^{2}+4p_{0}= H_{0}^2$.
Plugging this back into~(\ref{xin21}) yields
\begin{equation}
\label{xin2}
\begin{array}{lll}
\xi_{n2} & = & -\frac{\ds 1}{\ds 8}H_{0}\left[\frac{\ds 1}{\ds 2}\tr D_{2}\da^{t}H_{2}^{-1}\da
+3\da^{t}H_{2}^{-1}\left(\Sigma_{\pi}+\frac{\ds 1}{\ds 2}d_{0}\Sigma_{\delta}\right)\right]
\eqskip
& & \hspace*{1cm}+\frac{\ds 1}{\ds 24}H_{0}^2\tr D_{2}.
\end{array}
\end{equation}
If we now evaluate $u_{2}$ and $u_{4}$ at the maximizer we obtain
\[
u_{2}\left(\xm+\xmb\eps^{2}+\Odr(\eps^{3}),\frac{\ds 1}{\ds
2}d_{0}+\xi_{n2}\eps^{2}+\Odr(\eps^{3})\right) = \frac{\ds 1}{\ds 4}
H_{0}^{2} + \Odr(\eps^{3})
\]
and
\begin{align*}
u_{4}&\left(\xm+\xmb\eps^{2}+\Odr(\eps^{3}),\frac{\ds 1}{\ds
2}d_{0}+\xi_{n2}\eps^{2}+\Odr(\eps^{3})\right)
\\
&\hphantom{\xm+\xmb\eps^{2}+\Odr(\eps^{3}),}=\frac{\ds 1}{\ds
8}H_{0}^{2}\left[ d_{0}\tr(D_{2})+2\tr(P_{2})\right] + \Odr(\eps^2),
\end{align*}
where $\xi_{n2}$ and $\xmb$ are given as above. We now use the
fact that $p(x')=[H^{2}(x')-d^{2}(x')]/4$ and $2\tr D_{2}$ and $2\tr P_{2}$
are the Laplacian of $d$ and $p$, respectively, to rewrite $d_{0}\tr(D_{2})+2\tr(P_{2})$
as
\[
\frac{\ds 1}{\ds 2}\Delta(H^2_{0})-|\nabla d_{0}|^2.
\]
We have thus proven the following
\begin{theorem}\label{xm2xin2}
Under the conditions of Theorem~\ref{maxthm}, the maximizer $(x_{*}',\xi_{n*})$ of $u^{\eps}$
given by Lemma~\ref{lemmaeps2}
satisfies the asymptotic expansion
\[
(x_{*}',\xi_{n*})= (\xm + \xmb \eps^{2},\frac{\ds 1}{\ds 2}d_{0}+\xi_{n2}\eps^{2})
+\Odr(\eps^{3})
\mbox{ as } \eps\to 0,
\]
where $\xmb$ and $\xi_{n2}$ are given by~(\ref{x2}) and~(\ref{xin2}), respectively. The
corresponding maximum satisfies the asymptotic expansion given in Theorem~\ref{maxthm}.
\end{theorem}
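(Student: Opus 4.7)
The plan is to use Lemma~\ref{lemmaeps2} with $N=2$ as the starting point: it already guarantees that the maximizer has an expansion of the form $(x'_*, \xi_{n*}) = (\xm + \xmb\eps^2, \tfrac{1}{2}d_0 + \xi_{n2}\eps^2) + \Odr(\eps^3)$, so the only thing left is to identify $\xmb$ and $\xi_{n2}$. I would obtain them by substituting this ansatz into the stationarity condition $\nabla_{(x',\xi_n)} u_\eps^{(2)}(x',\xi_n) = 0$ (equivalently, the $N=2$ truncation of equation (\ref{4.10})) and equating the coefficient of $\eps^4$ to zero. Since $\nabla u_2$ vanishes at $(\xm, d_0/2)$ and produces only $\Odr(\eps^2)$ when evaluated at the perturbed point, only the \emph{first-order} Taylor data of $\nabla u_2$ and the \emph{zeroth-order} values of $\nabla u_4$ at $(\xm, d_0/2)$ are needed, and these are explicit from the formulas in Theorem~\ref{th1.1}.

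First I would treat the $\xi_n$-derivative. A direct Taylor expansion of $\partial_{\xi_n} u_2$ yields the term $-2\xi_{n2}+\da^t\xmb$, while $\partial_{\xi_n} u_4$ evaluated at $(\xm, d_0/2)$ contributes $\tfrac{1}{12}(d_0^2+4p_0)\tr D_2$ after inserting the local expansions (\ref{dpexp}). Setting the coefficient of $\eps^4$ to zero gives equation (\ref{xin21}), which expresses $\xi_{n2}$ linearly in $\xmb$. Next I would compute $\nabla_{x'} u_\eps$ at the ansatz point. The contribution from $u_2$ is straightforward and yields $\xi_{n2}\da + d_0 D_2 \xmb + 2 P_2 \xmb$. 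The contribution from $u_4$ is more delicate because $\nabla_{x'} u_4$ involves $\nabla_{x'}\Delta_{x'} d$ and $\nabla_{x'}\Delta_{x'} p$, i.e.\ \emph{third} derivatives of $d, p$ at $\xm$. This is why the Taylor developments must go one order beyond the Hessians; writing $D_3, P_3$ in the symmetrized form used before the theorem, the values of these gradients at $\xm$ reduce to the vectors $6\Sigma_\delta$ and $6\Sigma_\pi$. Combining and eliminating $\xi_{n2}$ via (\ref{xin21}) produces the linear system for $\xmb$ with matrix $d_0 D_2 + 2 P_2 + \tfrac{1}{2}\da\da^t$ on the left.

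The key obstacle, and the step I expect to be the most delicate, is proving that this matrix is invertible and identifying its inverse cleanly. I would handle this by expressing everything in terms of $h_\pm$. From $d = h_+ - h_-$, $p = h_+ h_-$, and $H = h_+ + h_-$, together with the critical-point condition $H_1 = h_1^+ + h_1^- = 0$ (which also forces $\pa = -\tfrac{1}{2} d_0 \da$), a short algebraic manipulation collapses the matrix to $H_0 H_2$. By hypothesis H2 this matrix is negative definite and hence invertible, and the formulas (\ref{x2}) and (\ref{xin2}) for $\xmb$ and $\xi_{n2}$ follow by inversion and back-substitution, using the identity $d_0^2 + 4 p_0 = H_0^2$.

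Finally, with $\xmb$ and $\xi_{n2}$ determined, I would read off the maximum value by a two-term Taylor expansion of $u_\eps^{(2)}$ at the maximizer. Because $\nabla u_2(\xm, d_0/2) = 0$, the $\eps^2$ correction coming from the shift of the maximizer into $u_2$ contributes only at order $\eps^4$ via the Hessian, so $u_2$ evaluated at $(x'_*,\xi_{n*})$ equals $\tfrac{1}{4} H_0^2 + \Odr(\eps^3)$. The $\eps^4$ coefficient of $\max u_\eps$ is then simply $u_4(\xm, d_0/2)$, which, from the formula for $\alpha_0^{(4)}$ and the value $\alpha_1^{(4)}(\xm)$ combined with $d_0/2$, gives $\tfrac{1}{8} H_0^2[d_0 \tr D_2 + 2 \tr P_2]$, matching the statement of Theorem~\ref{maxthm}. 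The $\Odr(\eps^5)$ remainder is inherited from (\ref{4.12}) applied with $N=2$.
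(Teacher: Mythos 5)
Your proposal is correct and follows essentially the same route as the paper: starting from Lemma~\ref{lemmaeps2}, expanding $d$ and $p$ locally at $\xm$ (including the cubic data needed because $\nabla_{x'}u_4$ involves third derivatives), obtaining~(\ref{xin21}) from the $\xi_n$-derivative, deriving the linear system for $\xmb$ from the $x'$-gradient, and then using the $h_\pm$-representation and the critical-point identity $h_1^+ + h_1^- = 0$ to collapse the coefficient matrix to $H_0H_2$, whose invertibility is exactly hypothesis~H2. The final evaluation of $u_2$ and $u_4$ at the perturbed maximizer, using $\nabla u_2(\xm,d_0/2)=0$ to push the Hessian correction to higher order, matches the paper's conclusion as well.
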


\section{Proof of Theorem~\ref{torsrig}\label{storsrig}}
To prove Theorem~\ref{torsrig} we need only to integrate $u^{\eps}$
in $\Omega_{\eps}$. More precisely, using the expression given
by Theorem~\ref{th1.1}, we have to compute
\[
\begin{array}{lll}
\dint_{\Omega_{\eps}} u^{\eps}(x)\;{\rm d}x & = & \dint_{\omega}
\dint_{-\eps h_{-}(x')}^{\eps h_{+}(x')}\eps^2
u_{2}\left(x',\frac{\ds x_{n}}{\ds \eps}\right)
+\eps^{4}u_{4}\left(x',\frac{\ds x_{n}}{\ds \eps}\right)\eqskip & &
\hspace*{0.5cm}+ \eps^{6}u_{6}\left(x',\frac{\ds x_{n}}{\ds
\eps}\right)+ \Odr(\eps^{7})\;{\rm d}x_{n}{\rm d}x' \eqskip & = &
\eps^{3}\dint_{\omega}\dint_{-h_{-}(x')}^{h_{+}(x')}u_{2}(x',\xi_{n})\;{\rm
d}\xi_{n}{\rm d}x' \eqskip & &
\hspace*{0.5cm}+\eps^{5}\dint_{\omega}\dint_{-h_{-}(x')}^{h_{+}(x')}
u_{4}(x',\xi_{n})\;{\rm d}\xi_{n}{\rm d}x' \eqskip & &
\hspace*{1cm}+\eps^{7}\dint_{\omega}\dint_{-h_{-}(x')}^{h_{+}(x')}
u_{6}(x',\xi_{n})\;{\rm d}\xi_{n}{\rm d}x' + \Odr(\eps^{8}).
\end{array}
\]
We shall now compute these three integrals separately. Using the expression for $u_{2}$ we
have
\[
\begin{array}{lll}
\dint_{-h_{-}(x')}^{h_{+}(x')}u_{2}(x',\xi_{n})\;{\rm d}\xi_{n} & = &
\dint_{-h_{-}(x')}^{h_{+}(x')}-\xi_{n}^{2}+\xi_{n}d(x')
+p(x')\;{\rm d}\xi_{n}\eqskip
& = & \left.-\frac{\ds 1}{\ds 3}\xi_{n}^{3}+\frac{\ds 1}{\ds 2}\xi_{n}^{2}d(x')
+p(x')\xi_{n}\right]_{-h_{-}(x')}^{h_{+}(x')}\eqskip
& = & -\frac{\ds 1}{\ds 3}\left[ h_{+}^{3}(x')+h_{-}^{3}(x')\right]
\eqskip
& & \hspace*{0.5cm}
+ \frac{\ds 1}{\ds 2}\left[ h_{+}^{2}(x')-h_{-}^{2}(x')\right]d(x')
\eqskip
& & \hspace*{1cm}
+\left[ h_{+}(x')+h_{-}(x')\right]p(x')
\end{array}
\]
and, taking into account the expressions for both $d$ and $p$, we get that the above
equals
\[
\frac{\ds 1}{\ds 6}\left[ h_{+}^{3}(x')+h_{-}^{3}(x')+3h_{+}^{2}(x')h_{-}(x')+
3h_{+}(x')h_{-}^{2}(x')\right]=\frac{\ds 1}{\ds 6}H^{3}(x')
\]
as desired.

For $u_{4}$ we have
\[
\begin{array}{lll}
\dint_{-h_{-}(x')}^{h_{+}(x')}u_{4}(x',\xi_{n})\;{\rm d}\xi_{n} & = &
\dint_{-h_{-}(x')}^{h_{+}(x')}-\frac{\ds 1}{\ds 6}\xi_{n}^{3}\Delta_{x'}d(x')
-\frac{\ds 1}{\ds 2}\xi_{n}^{2}\Delta_{x'}p(x')
\eqskip
& & \hspace*{0.5cm}
+\alpha_{1}^{(4)}(x')\xi_{n}+\alpha_{0}^{(4)}(x')\;{\rm d}\xi_{n}
\eqskip
& = &
-\frac{\ds 1}{\ds 24}\xi_{n}^{4}\Delta_{x'}d(x')
-\frac{\ds 1}{\ds 6}\xi_{n}^{3}\Delta_{x'}p(x')
\eqskip
& & \hspace*{0.5cm}
\left.
+\frac{\ds 1}{\ds 2}\alpha_{1}^{(4)}(x')\xi_{n}^{2}+\alpha_{0}^{(4)}(x')\xi_{n}
\right]_{-h_{-}(x')}^{h_{+}(x')}.
\end{array}
\]
Using the expressions for $\alpha_{0}^{(4)}$ and $\alpha_{1}^{(4)}$ we see that
the above integral equals
\[
\begin{array}{l}
-\frac{\ds 1}{\ds 24}\left[h_{+}^{4}(x')-h_{-}^{4}(x')\right]\Delta_{x'}d(x')
-\frac{\ds 1}{\ds 6}\left[h_{+}^{3}(x')+h_{-}^{3}(x')\right]\Delta_{x'}p(x')
\eqskip
\hspace*{0.5cm}
+\frac{\ds 1}{\ds 12}\left[(d^{2}(x')+p(x'))\Delta_{x'}d(x')+3d(x')\Delta_{x'}p(x')\right]
\left[ h_{+}^{2}(x')-h_{-}^{2}(x')\right]\eqskip
\hspace*{1cm}
+\frac{\ds 1}{\ds 6}\left[ d(x')p(x')\Delta_{x'}d(x')+3p(x')\Delta_{x'}p(x')\right]
\left[ h_{+}(x')+h_{-}(x')\right]\eqskip
= \frac{\ds 1}{\ds 24}\left[h_{+}^{4}(x')-h_{-}^{4}(x')+2h_{+}^{3}(x')h_{-}(x')
-2h_{+}(x')h_{-}^{3}(x')\right]\Delta_{x'}d(x')
\eqskip
\hspace*{0.5cm}
+\frac{\ds 1}{\ds 12}\left[ h_{+}^{3}(x')+h_{-}^{3}(x')+3h_{+}^{2}(x')h_{-}(x')
+3h_{+}(x')h_{-}^{2}(x')\right]\Delta_{x'}p(x')\eqskip
=\frac{\ds 1}{\ds 24}\left[ d^{3}(x')H(x')+4h_{+}^{3}(x')h_{-}(x')-
4h_{+}(x')h_{-}^{3}(x')\right]\Delta_{x'}d(x')
\eqskip
\hspace*{0.5cm}
\frac{\ds 1}{\ds 12}H^{3}(x')\Delta_{x'}p(x')\eqskip
=\frac{\ds 1}{\ds 24}d(x')H(x')\left[ d^{2}(x')+4p(x')\right]\Delta_{x'}d(x')
+\frac{\ds 1}{\ds 12}H^{3}(x')\Delta_{x'}p(x')\eqskip
=\frac{\ds 1}{\ds 24}H^{3}(x')\left[ d(x')\Delta_{x'}d(x')+2\Delta_{x'}p(x')\right].
\end{array}
\]
The coefficient in the Theorem is now obained by using the relations between $p$ and $H$
and $d$.

Although the expression for $u_{6}$ is much more involved, the necessary computations
are similar to those above. We need to compute
\[
\begin{array}{lll}
\dint_{-h_{-}(x')}^{h_{+}(x')}u_{6}(x',\xi_{n})\;{\rm d}\xi_{n} & = &
\dint_{-h_{-}(x')}^{h_{+}(x')} \frac{\ds \xi_{n}^{5}}{\ds 120}\D_{x'}^{2} d(x')+ \frac{\ds \xi_{n}^{4}}{\ds 24}\D_{x'}^{2} p(x')\eqskip
& & \hspace*{0.2cm}-\frac{\ds \xi_{n}^{3}}{\ds 36}\D_{x'}\left\{\left[d^{2}(x')+p(x')\right]\D_{x'}d(x')+
3d(x')\D_{x'}p(x')\right\}\eqskip
& & \hspace*{0.4cm} -\frac{\ds \xi_{n}^{2}}{\ds 12}\D_{x'}\left[ d(x')p(x')\D_{x'}d(x')+3p(x')\D_{x'}p(x')\right]\eqskip
& &  \hspace*{0.6cm} +\alpha^{(6)}_{1}(x')\xi_{n}+\alpha^{(6)}_{0}(x')\;{\rm d}\xi_{n}.
\end{array}
\]
Using now the following identities
\[
\begin{array}{lll}
h_{+}^3(x')+h_{-}^3(x') & = & H^3(x')-3p(x')H(x')\eqskip
h_{+}^4(x')-h_{-}^4(x') & = & H^3(x')d(x')-2d(x')p(x')H(x')\eqskip
h_{+}^5(x')+h_{-}^5(x') & = & H^3(x')d^2(x')+p(x')H(x')\left[p(x')-d^2(x')\right]\eqskip
h_{+}^6(x')-h_{-}^6(x') & = & H^3(x')d^3(x')+3d(x')p^2(x')H(x'),
\end{array}
\]
after some computations we obtain
\[
\begin{array}{lll}
\dint_{-h_{-}(x')}^{h_{+}(x')}u_{6}(x',\xi_{n})\;{\rm d}\xi_{n} & = &
\frac{\ds 1}{\ds 720}\left[H^3(x')d^3(x')\right.\eqskip
& & \hspace*{0.5cm}\left.+3d(x')p^2(x')H(x')\right]\D_{x'}^{2} d(x')
\eqskip
& & + \frac{\ds 1}{\ds 120}\left\{H^3(x')d^2(x')\right.\eqskip
& & \hspace*{0.5cm}\left.+p(x')H(x')\left[p(x')-d^2(x')\right]\right\}
\D_{x'}^{2} p(x')\eqskip
& & -\frac{\ds 1}{\ds 144}\left[H^3(x')d(x')-2d(x')p(x')H(x')\right]\eqskip
& & \hspace*{0.5cm}\times\D_{x'}\left\{\left[d^{2}(x')+p(x')\right]\D_{x'}d(x')+
3d(x')\D_{x'}p(x')\right\}\eqskip
& & -\frac{\ds 1}{\ds 36}\left[H^3(x')-3p(x')H(x')\right]\eqskip
& & \hspace*{0.5cm}\times\D_{x'}\left[ d(x')p(x')\D_{x'}d(x')+3p(x')
\D_{x'}p(x')\right]\eqskip
& & + \frac{\ds 1}{\ds 2}d(x') H(x')\alpha_{1}^{(6)}(x') + H(x')\alpha^{(6)}_{0}(x').
\end{array}
\]

This, together with the expressions for the integrals of $u_{2}$ and $u_{4}$ given above yields
the formula in Theorem~\ref{torsrig}.

\section{Examples\label{sexamp}}
Let us now apply our results to some special cases in order to test the accuracy
of the approximations for concrete examples. As may be seen from Figures~\ref{fig:folium}
and~\ref{fig:lemn} below, although the error for either the $L^{2}-$ norm or for the torsion
stays below $5\%$ for $\e$ up to $0.6$, this can vary substantially as the scaling parameter
approaches one. For the examples considered below the error at $\e$ equal to one for the torsion, for
instance, varies between less than $2\%$ and $100\%$ in the cases of the folium and the disc, respectively.
The reason for this is simply that, even in the case where a Taylor (or Laurent) series exists for the quantities
under consideration, the series expansion for these quantities will have a specific radius of convergence.
In the case of the disc, for instance, we have that the torsion is given by
\[
\frac{\ds \pi}{\ds 2}\frac{\ds \e^2}{\ds 1+\e^2}
\]
which has a radius of convergence of one, thus explaining the large error found in this case.

\subsection{Descartes's folium}
We consider the case of the domain defined by
\[
\Om_\e=\left\{ (x,y)\in\R^{2}: x(x^2+3\e^{-2}y^2)-x^2+\e^{-2}y^2<0,
\ 0<x<1\right\},
\]
where we have chosen coordinates in such a way that the scaling is done along
the $y-$axis and the corresponding height with respect to this axis
is minimal. In this case we then have
\[
\begin{array}{lll}
H(x)=2h_{+}(x)=2h_{-}(x)=2x\sqrt{\frac{\ds 1-x}{\ds 1+3x}}, & d(x)\equiv 0, &
p(x) = \frac{\ds (1-x)x^2}{\ds 3x+1},
\end{array}
\]
yielding
\begin{equation}\label{udesc}
\begin{array}{lll}
u^\e(x,\xi) & = & \left[\frac{\ds (1-x)x^2}{\ds
1+3x}-\xi^2\right]\e^2\eqskip & & \hspace*{0.2cm}+ \frac{\ds
\left[(3x+1)^3-4\right]\left[(x-1)x^2+(3x+1)\xi^2\right]}{\ds
3(1+3x^4)}\e^4 \eqskip & & \hspace*{0.4cm}-\frac{\ds
\left[(x-1)x^2+(3x+1)\xi^2\right]p_{1}(x,\xi) }{\ds (3x+1)^7}\e^6 +
\Odr(\eps^{8}),
\end{array}
\end{equation}
where
\[
p_{1}(x,\xi):=1-30x+51x^2+48x^3+135x^4+162x^5+81x^6-12\xi^2-36x\xi^2.
\]

The direct application of Theorem~\ref{maxthm}, where we only have the explicit formula for terms
up to order $\eps^4$, yields
\[
\max_{x\in\Omega_{\e}} u^{\e}(x,y) = \frac{\ds 1}{\ds 9}(2\sqrt{3}-3)\e^2+
\frac{\ds 1}{\ds 9}(12 - 7\sqrt{3})\e^4+\Odr(\e^{5}).
\]
Note that in this case determining the maximum directly from~(\ref{udesc}) in order to obtain
a better approximation implies solving an algebraic equation of degree nine. Actually, even solving explicitly for the maximizer using $u$ up to order $\e^4$ and taking into consideration that we know beforehand using symmetry that $\xi$ must vanish, implies having to solve an algebraic equation of degree five.

Computing the expansion for the torsion will in turn yield
\begin{equation}\label{foltors}
\begin{array}{lll}
\e\dint_{\Om} u(x,\xi)dxd\xi & = &\left(\frac{\ds 16\pi}{\ds
243\sqrt{3}}-\frac{\ds 1}{\ds 9}\right) \e^{3}-\left(\frac{\ds
16\pi}{\ds 243\sqrt{3}}-\frac{\ds 37}{\ds 315}\right)\e^{5}\eqskip &
& \hspace*{0.2cm}+\left(\frac{\ds 80\pi}{\ds 2187\sqrt{3}}-\frac{\ds
593}{\ds 9009}\right) \e^{7}+ \Odr(\eps^{9}).
\end{array}
\end{equation}
In Figure~\ref{fig:folium} we show, for various values of $\eps$, the
relative errors for the $L^{2}$ norm and for the torsion of the diference
between the values of the numerical solution determined using the method of
fundamental solutions (MFS) and the asymptotic expansions given by~(\ref{udesc}) and~(\ref{foltors}). We note that the error at $\eps$ equal to one -- which
corresponds to the actual folium -- is of the order of $3.5\%$ and $2\%$
respectively.

\begin{figure}[!ht]\label{fig:folium}
\centering
\includegraphics[width=1\textwidth]{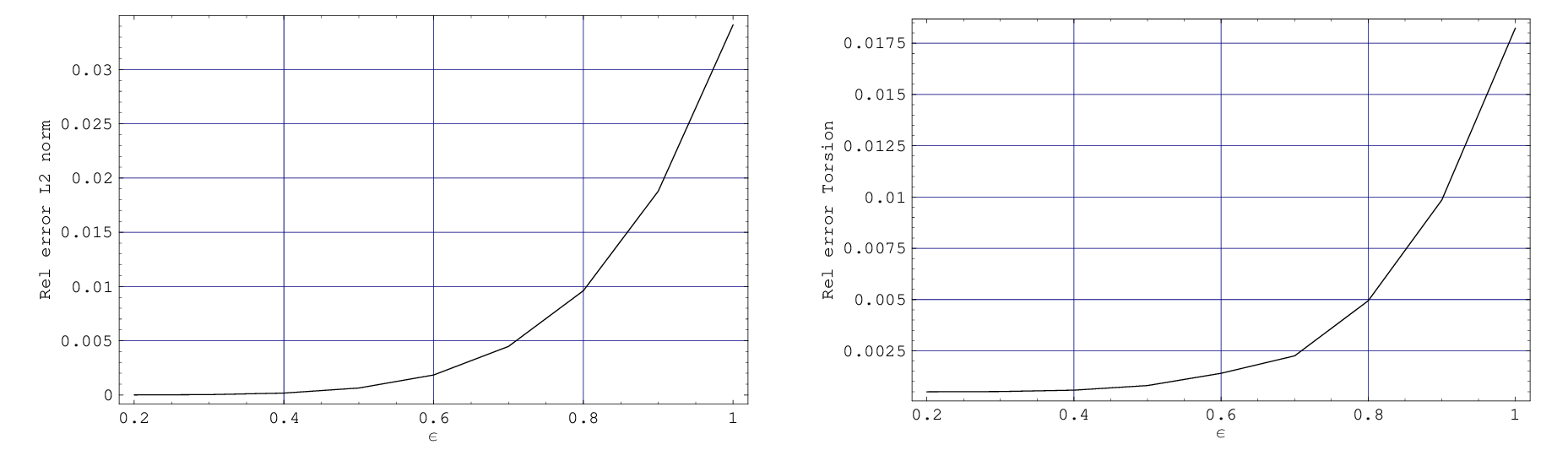}
\caption{Graphs of the relative errors for the $L^{2}$ norm and the torsion
in the case of the folium; the comparison is between the numerical solution
obtained by the MFS and the asymptotic expansion given by equations~(\ref{udesc}) and~(\ref{foltors}).}
\end{figure}

\subsection{Lemniscate}
We consider the domain $\Om_\e$ whose boundary is the lemniscate
defined by
\[
\left( x^2+ \e^{-2}y^2\right)^2 = x^2-\e^{-2}y^2, \quad x>0.
\]
The functions $H$, $h_\pm$, $d$ and $p$ are given by the formulas
\begin{align*}
&H(x)=2h_{+}(x)=2h_{-}(x) = 2\left[-\frac{1}{2}-x^2+
\frac{1}{2}(1+8x^2)^{1/2}\right]^{1/2},
\\
&d(x)\equiv 0,\quad p=-\frac{1}{2}-x^2+
\frac{1}{2}(1+8x^2)^{1/2},\quad \om=(0,1).
\end{align*}
Some straightforward calculations give
\begin{align*}
u^\e(x,\xi)&=\left(-\frac{1}{2}-x^2+
\frac{1}{2}(1+8x^2)^{1/2}-\xi^2\right)\e^2+
\\
&+\frac{\left[(1+8x^2)^{3/2}-2\right]\left(1+2x^2-\sqrt{1+8x^2}+ 2\xi^2\right)}
{\ds 2(1+8x^2)^{3/2}}\e^4
\\
&+\Bigg(4\xi^4 \frac{32x^2-1}{(1+8x^2)^{7/2}}
\\
&+\xi^2 \frac{ (-512
x^6-192x^4-216x^2+7)\sqrt{1+8x^2}+256x^4+328x^2-8}{(1+8x^2)^{7/2}}
\\
&- \frac{\big(1+2x^2-\sqrt{1+8x^2}\big) p_2(x) }{2(1+8x^2)^{7/2}}
\Bigg)\e^6+\Odr(\e^8),
\\
p_2(x)&:=\sqrt{1+8x^2}(512x^6 +192x^4+152x^2-5)-128x^4-268x^2+6.
\end{align*}
The maximum of $H$ is now situated at $\xm=\sqrt{3}/(2\sqrt{2})$ and using
\begin{align*}
& H_0=\frac{\sqrt{2}}{2}, \quad d_0=\tr D_2=0,\quad 2\tr
P_2=-\frac{3}{2},
\end{align*}
in Theorem~\ref{maxthm} then yields
\begin{align*}
& \max_{x\in\Omega_{\eps}} u^{\eps}(x) =
\frac{\e^2}{8}-\frac{3\e^4}{32} +\Odr(\e^5).
\end{align*}
By applying Theorem~\ref{torsrig} we arrive at the asymptotics for
the torsional rigidity
\begin{align*}
\int_{\Om_\e} u^\e(x)\di x=& \frac{3\pi-8}{48}\e^3+ \left[
\frac{\sqrt{3}}{4}\ln(2+\sqrt{3})-\frac{3\pi}{16}\right]\e^5
\\
&+ \left[ \frac{13}{18}+ \frac{5\pi}{16} -
\frac{20\sqrt{3}}{27}\ln(2+\sqrt{3}) \right] \e^7+\Odr(\e^9),
\end{align*}
where the integrals appearing in the coefficients can be calculated
by the Euler substitution
\begin{equation*}
\sqrt{1+8x^2}=xt+1,\quad x=\frac{2t}{8-t^2},\quad
t=\frac{\sqrt{1+8x^2}-1}{x}.
\end{equation*}

As in the previous example, we show in Figure~\ref{fig:lemn} the relative
errors for the $L^2$ norm and the torsion in this case. However, comparing the
two examples gives that for $\e$ larger than approximately $0.4$ the errors
become much larger than in the case of the folium.

\begin{figure}[!ht]\label{fig:lemn}
\centering
\includegraphics[width=1\textwidth]{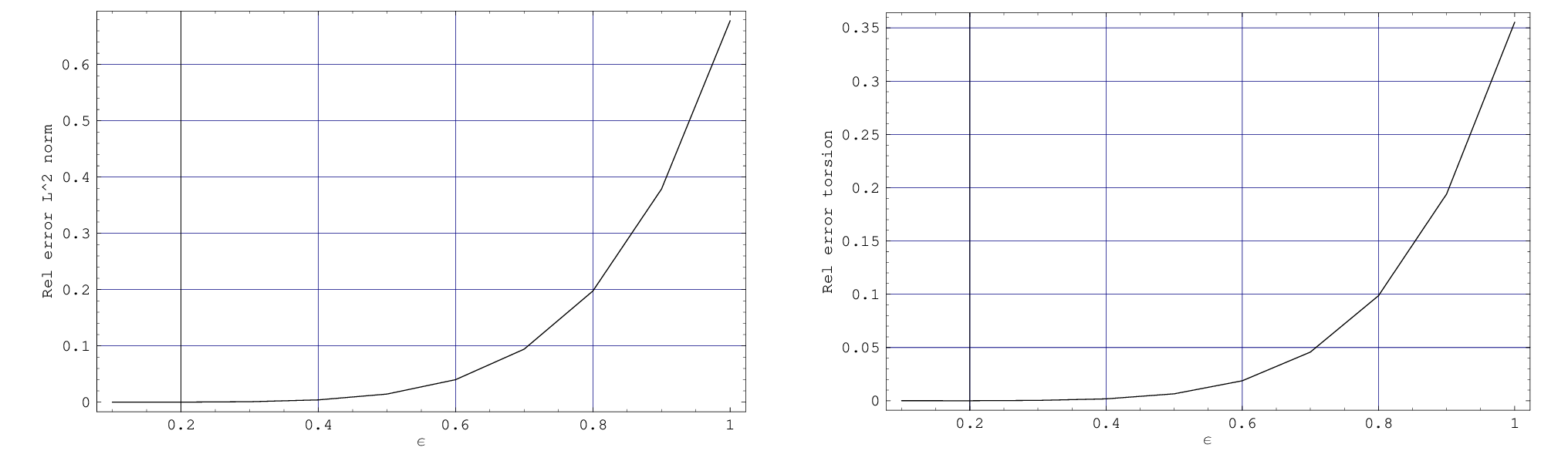}
\caption{Graphs of the relative errors for the $L^{2}$ norm and the torsion
in the case of the lemniscate, with the different quantities computed in a
similar way to what was done for the folium.}
\end{figure}

\subsection{Ellipsoids} As mentioned in the Introduction, these are one of the
few examples where the explicit solution of the corresponding equation~(\ref{ellipteq})
is known. More precisely, if we consider ellipsoids defined by
\[
E = \left\{ x\in\R^{n}: \left(\frac{\ds x_{1}}{\ds a_{1}}\right)^2+\dots+
\left(\frac{\ds x_{n}}{\ds a_{n}}\right)^2=1\right\}
\]
we have that the solution of equation~(\ref{ellipteq}) in this case is given by
\[
u(x) = \frac{\ds 1}{\ds \frac{\ds 1}{\ds a_{1}^2}+\dots+\frac{\ds 1}{\ds a_{n}^2}}
\left[1-\left(\frac{\ds x_{1}}{\ds a_{1}}\right)^2-\dots-
\left(\frac{\ds x_{n}}{\ds a_{n}}\right)^2\right]
\]
The maximum is thus localized at the origin and is given by
\[
M = \frac{\ds 1}{\ds \frac{\ds 1}{\ds a_{1}^2}+\dots+\frac{\ds 1}{\ds a_{n}^2}}.
\]
Let's assume that $a_{n}$ is the smallest of the $a_{j}$'s, and we thus pick this
direction to be that along which we scale the domain. We then have
\[
 H(x') = 2h_{+}(x') = 2h_{-}(x') = 2a_{n}\left[1-\left(\frac{\ds x_{1}}{\ds a_{1}}\right)^2-\dots-
\left(\frac{\ds x_{n-1}}{\ds a_{n-1}}\right)^2\right]^{1/2},
\]
while $d$ vanishes and
\[
p(x') = a_{n}^2\left[1-\left(\frac{\ds x_{1}}{\ds a_{1}}\right)^2-\dots-
\left(\frac{\ds x_{n-1}}{\ds a_{n-1}}\right)^2\right].
\]
 From this it follows that the maximizer is $\Odr(\eps^{3})$, while the maximum has the expansion
\[
M = a_{n}^2\eps^{2}-a_{n}^4\dsum_{j=1}^{n-1}
\frac{\ds 1}{\ds a_{j}^2} \eps^{4} + \Odr(\eps^{6}).
\]
A straightforward analysis of the error
\[
\mathcal{E}_{M} = \left[\frac{\ds 1}{\ds \frac{\ds 1}{\ds a_{1}^2}+\dots+\frac{\ds 1}{\ds \e^{2}a_{n}^2}}-
a_{n}^2\eps^{2}+a_{n}^4\dsum_{j=1}^{n-1}
\frac{\ds 1}{\ds a_{j}^2} \eps^{4}\right]
\left(\frac{\ds 1}{\ds \frac{\ds 1}{\ds a_{1}^2}+\dots+\frac{\ds 1}{\ds \e^{2} a_{n}^2}}
\right)^{-1}
\]
yields that this satisfies
\[
0\leq\mathcal{E}_{M}\leq \eps^{4}(n-1)^2,
\]
with equality on the right-hand side being achieved for the ball.

For the sake of comparison with the previous two-dimensional
examples, we shall now consider the case of the planar disc (in the
above notation, $n=2, a_{1}=1, a_{2}=\e$). In this case the above
expressions yield that the relative error of the $L^{2}$ norm and of
the torsion are, respectively, $\e^{12} + \Odr(\eps^{13})$ and
$\e^{6}+\Odr(\eps^{7})$. Thus, although the approximation is
very good for sufficiently small values of $\e$, the error does
become quite large as $\e$ reaches one.

\section*{Acknowledgments} D.B. was partially supported by RFBR
(10-01-00118), by the grants of the President of Russia for young
scientists (MD-453.2010.1) and for Leading Scientific Schools
(NSh-6249.2010.1), and by the Federal Task Program (contract 02.740.11.0612).
P.F. was partially supported by POCTI/POCI2010 and PTDC/MAT/101007/2008,
Portugal. Part of this work was done while P.F. was visiting the
Erwin Schr\"{o}dinger Institute in Vienna within the scope of the
program {\it Selected topics in spectral theory} and he would like
to thank the organizers B. Hellfer, T. Hoffman-Ostenhof and A.
Laptev for their hospitality and ESI for financial support. He would
also like to thank Rodrigo Ba\~{n}uelos for some very helpful
conversations during this period. We would also like to thank P.
Antunes for having carried out the numerical results used in
Section~\ref{sexamp}.

\end{document}